\documentclass[leqno,11pt]{amsart}
\usepackage{amsmath}
\usepackage{amssymb}
\usepackage{amsthm}

\usepackage{mathrsfs}
\usepackage{dsfont}
\usepackage{txfonts}
\usepackage{mathabx}

\usepackage{graphicx}
\usepackage[all]{xy}
\usepackage{pgf,tikz}
\usepackage{subfigure}
\usepackage{tikz-3dplot}
\usetikzlibrary{patterns,arrows}
\tikzset{
    partial ellipse/.style args={#1:#2:#3}{
        insert path={+ (#1:#3) arc (#1:#2:#3)}
    }
}

\setlength{\oddsidemargin}{0.125in}
\setlength{\evensidemargin}{0.125in}
\setlength{\textwidth}{6.375in}
\topskip 0in
\topmargin 0.375in
\footskip 0.25in


\usepackage{hyperref}

\usepackage{color}
\usepackage[shortlabels]{enumitem}

\newtheorem{theorem}{Theorem}[section]
\newtheorem{lemma}[theorem]{Lemma}
\newtheorem{corollary}[theorem]{Corollary}

\newtheorem{proposition}[theorem]{Proposition}

\theoremstyle{definition}

\theoremstyle{remark}

\newcommand{\tip}{\mathrm{tip}}

\newcommand{\bw}{\mathbf{w}}
\newcommand{\bg}{\mathbf{g}}
\newcommand{\bW}{\mathbf{W}}
\newcommand{\bG}{\mathbf{G}}

\newcommand{\eps}{\varepsilon}

\title[{A gradient estimate for the linearized translator equation}]{A gradient estimate for the linearized translator equation}

\author{Kyeongsu Choi, Robert Haslhofer, Or Hershkovits}

\begin{document}

\begin{abstract}
In this paper, we develop some analytic foundations for the linearized translator equation in $\mathbb{R}^4$, i.e. in the first dimension where the Bernstein property fails. This equation governs how the (noncompact) singularity models of the mean curvature flow in $\mathbb{R}^4$ fit together in a common moduli space. Here, we prove a gradient estimate, which gives a sharp bound for $W_v$, namely for the derivative of the variation field $W$ in the tip region. This serves as a substitute for the fundamental quadratic concavity estimate from Angenent-Daskalopoulos-Sesum, which has been crucial for controlling $Y_v$, namely the derivative of the profile function $Y$ in the tip region. Moreover, together with interior estimates by virtue of the linearized translator equation our gradient estimate implies a bound for $W_\tau$ as well. Hence, our gradient estimate also serves as substitute for Hamilton's Harnack inequality, which has played an important role for controlling $Y_\tau$ in the tip region.
\end{abstract}

\maketitle

\section{Introduction}

In this paper, we are concerned with the linearized translator equation
\begin{equation}\label{lin_trans_eq_intro}
\mathrm{div}(a_\phi  Du)+ b_\phi\cdot Du=f,
\end{equation}
where
\begin{equation}
a_\phi=\frac{\delta}{\sqrt{1+|D \phi|^2}}  -\frac{D\phi\otimes D\phi}{{(1+|D \phi|^2)^{3/2}}} , \qquad b_\phi=\frac{D \phi}{(1+|D \phi|^2)^{3/2}}.
\end{equation}
This equation arises as first variation of the translator equation
\begin{equation}\label{trans_eq_intro}
\mathrm{div}\left(\frac{D \phi}{\sqrt{1+|D \phi|^2}}\right)-\frac{1}{\sqrt{1+|D \phi|^2}}=0.
\end{equation}
We recall that translators model slowly forming singularities under mean curvature flow, see e.g. \cite{Hamilton_Harnack,HuiskenSinestrari_convexity,White_nature,HaslhoferKleiner_meanconvex}. In particular, it is known that all translators that arise as blowup limits of mean-convex mean curvature flow are given by convex entire solutions of \eqref{trans_eq_intro}. We also recall that the analytic property of being a convex entire graph is equivalent to the geometric property that $M=\mathrm{graph}(\phi)\subset \mathbb{R}^N$ is noncollapsed \cite{BN_noncollapsing,BLL}. In pioneering work \cite{Wang_convex}, Wang on the one hand proved that every noncollapsed translator in $\mathbb{R}^3$ is the unique rotationally symmetric bowl from \cite{AltschulerWu}, and on the other hand for every $N\geq 4$    constructed nontrivial examples of noncollapsed translators in $\mathbb{R}^N$, i.e. examples that are neither rotationally symmetric nor split off a line. Later, a more detailed construction of Wang's examples has been given by Hoffman-Ilmanen-Martin-White \cite{HIMW}. In particular, in $\mathbb{R}^4$ one obtains a one-parameter family of examples $\{M_{\kappa}\}_{\kappa\in (0,1/3)}$, called the oval-bowls, which are illustrated in Figure \ref{figure_oval_bowls}.\\

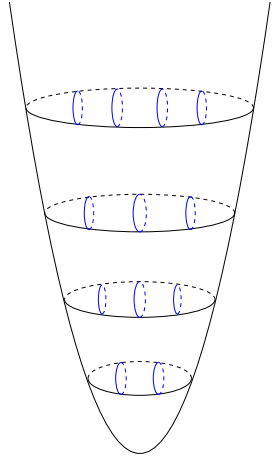
\begin{figure}
\scalebox{0.5}{
\begin{tikzpicture}[x=1cm,y=1cm] \clip(-4,12) rectangle (4,-1);
\draw [samples=100,rotate around={0:(0,0)},xshift=0cm,yshift=0cm,domain=-8:8)] plot (\x,{(\x)^2});
\draw [dashed] (0,2) [partial ellipse=0:180:2.75*0.5cm and 0.9*0.5cm];
\draw (0,2) [partial ellipse=180:360:2.75*0.5cm and 0.9*0.5cm];
\draw [dashed] (0,4.1) [partial ellipse=0:180:4*0.5cm and 0.95*0.5cm];
\draw (0,4.1) [partial ellipse=180:360:4*0.5cm and 0.95*0.5cm];
\draw [dashed] (0,6.4) [partial ellipse=0:180:5.05*0.5cm and 1*0.5cm];
\draw (0,6.4) [partial ellipse=180:360:5.05*0.5cm and 1*0.5cm];
\draw [dashed] (0,9.2) [partial ellipse=0:180:6.05*0.5cm and 1.05*0.5cm];
\draw (0,9.2) [partial ellipse=180:360:6.05*0.5cm and 1.05*0.5cm];
\draw [color=blue] (0,4.1) [partial ellipse=90:270:0.3*0.5cm and 0.95*0.5cm];
\draw [color=blue,dashed] (0,4.1) [partial ellipse=-90:90:0.3*0.5cm and 0.95*0.5cm];
\draw [color=blue] (-2*0.5,4.1) [partial ellipse=90:270:0.2*0.5cm and 0.77*0.5cm];
\draw [color=blue,dashed] (-2*0.5,4.1) [partial ellipse=-90:90:0.2*0.5cm and 0.77*0.5cm];
\draw [color=blue] (2*0.5,4.1) [partial ellipse=90:270:0.2*0.5cm and 0.77*0.5cm];
\draw [color=blue,dashed] (2*0.5,4.1) [partial ellipse=-90:90:0.2*0.5cm and 0.77*0.5cm];
\draw [color=blue] (0,6.4) [partial ellipse=90:270:0.35*0.5cm and 1*0.5cm];
\draw [color=blue,dashed] (0,6.4) [partial ellipse=-90:90:0.35*0.5cm and 1*0.5cm];
\draw [color=blue] (-2.7*0.5,6.4) [partial ellipse=90:270:0.25*0.5cm and 0.88*0.5cm];
\draw [color=blue,dashed] (-2.7*0.5,6.4) [partial ellipse=-90:90:0.25*0.5cm and 0.88*0.5cm];
\draw [color=blue] (2.7*0.5,6.4) [partial ellipse=90:270:0.25*0.5cm and 0.88*0.5cm];
\draw [color=blue,dashed] (2.7*0.5,6.4) [partial ellipse=-90:90:0.25*0.5cm and 0.88*0.5cm];
\draw [color=blue] (-1*0.5,2) [partial ellipse=90:270:0.28*0.5cm and 0.85*0.5cm];
\draw [color=blue,dashed] (-1*0.5,2) [partial ellipse=-90:90:0.28*0.5cm and 0.85*0.5cm];
\draw [color=blue] (1*0.5,2) [partial ellipse=90:270:0.28*0.5cm and 0.85*0.5cm];
\draw [color=blue,dashed] (1*0.5,2) [partial ellipse=-90:90:0.28*0.5cm and 0.85*0.5cm];
\draw [color=blue] (-1.2*0.5,9.2) [partial ellipse=90:270:0.28*0.5cm and 1*0.5cm];
\draw [color=blue,dashed] (-1.2*0.5,9.2) [partial ellipse=-90:90:0.28*0.5cm and 1*0.5cm];
\draw [color=blue] (1.2*0.5,9.2) [partial ellipse=90:270:0.28*0.5cm and 1*0.5cm];
\draw [color=blue,dashed] (1.2*0.5,9.2) [partial ellipse=-90:90:0.28*0.5cm and 1*0.5cm];
\draw [color=blue] (-3.3*0.5,9.2) [partial ellipse=90:270:0.25*0.5cm and 0.9*0.5cm];
\draw [color=blue,dashed] (-3.3*0.5,9.2) [partial ellipse=-90:90:0.25*0.5cm and 0.9*0.5cm];
\draw [color=blue] (3.3*0.5,9.2) [partial ellipse=90:270:0.25*0.5cm and 0.9*0.5cm];
\draw [color=blue,dashed] (3.3*0.5,9.2) [partial ellipse=-90:90:0.25*0.5cm and 0.9*0.5cm];
\end{tikzpicture}}
\caption{The oval-bowls $\{M_{\kappa}\}_{\kappa\in(0,1/3)}$ are noncollapsed translators in $\mathbb{R}^4$, whose level sets look like 2d-ovals in $\mathbb{R}^3$. The principal curvatures at the origin are $(\kappa,\tfrac{1-\kappa}{2},\tfrac{1-\kappa}{2})$.}\label{figure_oval_bowls}
\end{figure}

The linearized translator equation played an important role in our recent classification of noncollapsed translators in $\mathbb{R}^4$, see \cite{CHH_translators,CHH_lin_trans}.
Loosely speaking, while the analysis of individual translators is of course based on the bona-fide translator equation \eqref{trans_eq_intro}, to understand how these different translators fit together in a common moduli space, and in particular to show that the oval-bowls are uniquely determined by their tip-curvature $\kappa$, we had to develop a Fredholm theory for the linearized translator equation \eqref{lin_trans_eq_intro}.\\

The linearized equation is less geometric. Hence, several key estimates that are well established in the context of \eqref{trans_eq_intro}, specifically the avoidance principle \cite{Ilmanen_book}, the shrinker foliation from \cite{ADS1}, the quadratic concavity estimate from \cite{ADS2} and Hamilton's Harnack inequality \cite{Hamilton_Harnack}, are not at all obvious in the context of \eqref{lin_trans_eq_intro}. In \cite{CHH_lin_trans}, we already established two fundamental estimates, namely an upper-lower estimate and an inner-outer estimate, which serve as substitute for the former two. The purpose of the present paper is to establish an estimate that serves as substitute for the latter two.\\

To describe our results, let us first recall the setting from \cite{CHH_translators,CHH_lin_trans}. Let $M=\mathrm{graph}(\phi)\subset \mathbb{R}^4$ be an oval-bowl. We normalize, such that $\phi\geq 0$ with equality at the origin, and such that the $\mathrm{SO}_2$-symmetry is in the $x_2x_3$-variables. The description in terms of $\phi$ is most useful in the cap region $\{ \phi \leq C\}$, where $|D\phi|$ is bounded. To analyze the level sets $M\cap \{x_4=-t\}$ for $t\ll 0$ it is better to work with the cylindrical profile function $V$ and the tip profile function
$X$, which are implicitly defined by
\begin{equation}
\phi(x,V(x,t),0)=-t,\qquad \phi(X(\upsilon,t),\upsilon,0)=-t.
\end{equation}
Similarly, for the linearized translator equation the description in terms of $u$, which captures the variation in $x_4$-direction, is most useful in the cap region. To analyze the linearized translator equation for $x_4\gg 1$ it is better to work with the cylindrical variation $\bw$ and the tip variation $\bW$, which are defined by
\begin{equation}
\bw(x,t)=-V_t(x,t)\, u(x,V(x,t),0),\qquad \bW(\upsilon,t)=-X_t(\upsilon,t)\, u(X(\upsilon,t),\upsilon,0),
\end{equation}
and which capture the variation in $x_2x_3$-direction and in $x_1$-direction, respectively. In a similar vain, the inhomogeneity $f$ transforms to cylindrical and tip gauge according to the formulas
\begin{equation}\label{bg_def_intro}
\bg(x,t) =\sqrt{1+V_x^2+V_t^2}\, f(x,V(x,t),0),\qquad
\bG(\upsilon,t)=\sqrt{1+X_{\upsilon}^2+X_t^2}\, f(X(\upsilon,t),\upsilon,0).
\end{equation}
Most of the time we will actually work with the renormalized versions of these functions, specifically with
\begin{equation}
w(y,\tau)= e^{\frac{\tau}{2}}\bw\big(e^{-\frac{\tau}{2}}y,-e^{-\tau}\big),\qquad g(y,\tau)= e^{-\frac{\tau}{2}}\bg\big(e^{-\frac{\tau}{2}}y,-e^{-\tau}\big).
\end{equation}
In terms of $w$ and $g$ the linearized translator equation takes the form
\begin{equation}\label{eq_intro_w}
-w_\tau+\left(1-\frac{v_{y}^2}{1+ v_y^2}\right)w_{yy}-\left(\frac{y}{2}+\frac{2v_{y}v_{yy}}{(1+v_{y}^2)^2}\right) w_y+\left(1+\frac{2-v^2}{2v^2}\right)w+e^\tau \mathcal{F}w=g,
\end{equation}
where $v(y,\tau)=e^{\tau/2}V(e^{-\tau/2}y,-e^{-\tau})$ denotes the renormalized profile function. Finally, setting
\begin{equation}
W(v,\tau)= e^{\frac{\tau}{2}}\bW\big(e^{-\frac{\tau}{2}}v,-e^{-\tau}\big),\qquad G(v,\tau)= e^{-\frac{\tau}{2}}\bG\big(e^{-\frac{\tau}{2}}v,-e^{-\tau}\big),
\end{equation}
in tip gauge the linearized translator equation takes the form
\begin{equation}\label{evolve_inhom_tip_intro}
-W_\tau+\frac{W_{vv}}{1+Y_{v}^2}+\left(\frac{1}{v}-\frac{v}{2}-2\frac{Y_{vv}Y_{v}}{(1+Y_{v}^2)^2}\right)W_v+\frac{1}{2}W+e^{\tau} {\mathcal{F}}W=G\, ,
\end{equation}
where $Y(\cdot,\tau)$ denotes the renormalized version of $X$, in other words the inverse function of $v(\cdot,\tau)$.\footnote{The precise formulas for the terms $\mathcal{F}w$ and $\mathcal{F}W$ can be found in \cite[Section 3]{CHH_lin_trans}, though for our purpose it suffices to know both operators $\mathcal{F}$ are second order linear differential operators whose coefficients grow at most polynomially in $\tau$.}\\

Now, for any $h<\infty$, the hypersurface $M\cap \{x_4<h\}$ can be expressed as graph over a domain $\Omega_h\subset\mathbb{R}^3$. Denote by $C^{k-2,\alpha}(\Omega_h/S^1)$ the space of all $f\in C^{k-2,\alpha}(\Omega_h)$ that are $\mathrm{SO}_2$-symmetric in the $x_2x_3$-variables. To develop the Fredholm theory in \cite{CHH_lin_trans}, given any $h<\infty$ and $f\in C^{k-2,\alpha}(\Omega_h/S^1)$, we considered the Dirichlet problem
\begin{equation}\label{bdval_prob}
    \begin{cases}
      \mathrm{div}(a_\phi  Du)+ b_\phi\cdot Du  =f & \text{in  $\Omega_h$}\\
      u =0 &  \text{on $\partial \Omega_h$},\\
    \end{cases}       
\end{equation}
and established estimates that are uniform in $h$. Specifically, we proved an upper-lower estimate and and inner-outer estimate, which allow to propagate $L^\infty$-information. In particular, the inner-outer estimate allows to propagate smallness in the parabolic region $|y|\leq \ell$ to smallness in the intermediate and tip region. Loosely speaking, the estimate says if $|w|\leq A/|\tau|^{1+\mu}$ in the parabolic region, then $|w|\leq CA(\sqrt{2}-v)/|\tau|^\mu$ in the intermediate region and $|W|\leq CA|\tau|^{1/2-\mu}$ in the tip region $v\leq \theta$. More precisely, fixing a small constant $\theta >0$, a large constant $\ell<\infty$, and a large height $h_0<\infty$, such that the asymptotics from \cite{CHH_translators} hold at all $\tau\leq\tau_0=\log(h_0)$, the statement is:

\begin{theorem}[inner-outer estimate, \cite{CHH_lin_trans}]\label{inner_outer_intro}
Let $\mu\geq 0$. Suppose that $A<\infty$ is a constant such that
\begin{equation}\label{lower_bd_basic_sup_intro}
\sup_{\tau\in [-\log(h)+1,\tau_0]}  |\tau|^{1+\mu}\sup_{|y|\leq \ell}|w(y,\tau)|+ \sup_{\tau\in [-\log (h),-\log(h)+1]}  |\tau|^{1/2+\mu}\sup_{|y|\leq \ell}|w(y,\tau)| +
 \sup_{x\in \partial \Omega_{h_0}}|u(x)|\leq A,
\end{equation}
and suppose that for all $\tau\in [-\log (h),\tau_0]$ we have
\begin{equation}\label{g_growth_basic_sup_intro}
|\tau|^{\mu}\!\!\!\sup_{y \in \left[\ell,Y(\theta,\tau)\right]}    (\sqrt{2}-v)^{-2}|g(y,\tau)| +
|\tau|^{\mu}\sup_{v\leq \theta} \left(|\tau|^{1/2}+\frac{1}{v^3}\min\left(1,v^2|\tau|/\ell^2\right)\right)^{-1} |G(v,\tau)| \leq A.
\end{equation}
Then, there is some $C=C(\mu,\tau_0,\ell,\theta)<\infty$, such that for all $\tau\in [-\log (h)^{1/2},\tau_0]$ we get
\begin{equation}
|\tau|^{\mu}\!\!\!\sup_{y \in \left[\ell,Y(\theta,\tau)\right]}  (\sqrt{2}-v)^{-1}|w(y,\tau)| +
 |\tau|^{\mu-1/2} \sup_{v\leq \theta} |W(v,\tau)| \leq CA.
\end{equation}
\end{theorem}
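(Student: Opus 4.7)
The plan is to prove this by the maximum principle, using two explicit super-solution barriers, one in the intermediate region and one in the tip region, matched along the interface $\{v=\theta\}$.

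First, in the intermediate region $y\in [\ell,Y(\theta,\tau)]$ I would use the ansatz $\overline{w}(y,\tau) = KA(\sqrt{2}-v(y,\tau))/|\tau|^\mu$ with $K$ a large constant, and compute the action of the operator from \eqref{eq_intro_w}. Using the asymptotics of the renormalized profile function $v$ from \cite{CHH_translators} (in particular $v\to\sqrt{2}$ with quantitative rates and $v_y\to 0$), one sees that the dominant contribution comes from the Ornstein-Uhlenbeck drift $-\tfrac{y}{2}\overline{w}_y$, which after using the identity $y v_y \sim -(2-v^2)/v$ inherited from the shrinker profile is of size comparable to $(\sqrt{2}-v)^2/|\tau|^\mu$ with the sign that makes $\overline w$ a super-solution. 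The remaining contributions (the $\tau$-derivative, the diffusion, and the unstable zeroth-order coefficient $1+(2-v^2)/2v^2$, which is small in the intermediate region) are of lower order. By \eqref{g_growth_basic_sup_intro} the inhomogeneity satisfies $|g|\leq A(\sqrt{2}-v)^2/|\tau|^\mu$, which is precisely tailored to this computation.

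Second, in the tip region $v\leq\theta$ I would use $\overline{W}(v,\tau)=KA|\tau|^{1/2-\mu}$. For this spatially constant ansatz only the zeroth-order term $\tfrac12 \overline{W}$ and the time derivative $-\overline{W}_\tau$ contribute, yielding a coefficient of size $|\tau|^{1/2-\mu}$. This dominates the bulk part of the tip inhomogeneity bound in \eqref{g_growth_basic_sup_intro}, which is of size $|\tau|^{1/2-\mu}$; the additional singular piece $v^{-3}\min(1,v^2|\tau|/\ell^2)/|\tau|^\mu$ concentrated near $v=\theta$ is absorbed by adding a local correction $CA\,\chi(v)|\tau|^{-\mu}$ designed so that the corresponding $L\chi$ matches the singular profile.

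Third, the two barriers are matched at the interface by the gauge-change rule $W(\theta,\tau) = (X_t/V_t)(\theta,\tau)\, w(Y(\theta,\tau),\tau)$ together with the asymptotic identity $X_t/V_t \sim |\tau|^{1/2}$ that follows from the translator asymptotics of \cite{CHH_translators}: this converts $\overline{w}(Y(\theta,\tau),\tau) = KA(\sqrt{2}-\theta)/|\tau|^\mu$ into a quantity of order $KA|\tau|^{1/2-\mu}$, compatible with $\overline{W}$. The inner boundary $|y|=\ell$ of the intermediate region is handled by the first term of \eqref{lower_bd_basic_sup_intro}, the initial time by the second term, and the outer boundary of $\Omega_h$ by the third. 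The maximum principle in each region then yields $|w|\leq \overline{w}$ and $|W|\leq \overline{W}$ up to constants, which is precisely the stated conclusion.

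The main obstacle is the nonlocal term $e^\tau\mathcal{F}w$ in the cylindrical equation \eqref{eq_intro_w}: since it couples the values of $w$ across all of $\Omega_h$, a local maximum-principle argument does not close directly. I would handle it by absorbing its contribution using the exponential smallness of $e^\tau$ together with an a priori $L^\infty$ bound coming from the upper-lower estimate of \cite{CHH_lin_trans}, and then run the barrier comparison on $[-\log h+1,\tau_0]$ with the nonlocal term treated as a controlled forcing. A secondary technical point is the careful handling of the gauge transition at $v=\theta$ and of the $\min(1,v^2|\tau|/\ell^2)$ factor in the tip inhomogeneity bound near the interface; both require tracking the asymptotic formulas for $v$, $Y$, $V_t$, and $X_t$ with sufficient precision.
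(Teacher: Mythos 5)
The statement you are trying to prove, Theorem~\ref{inner_outer_intro}, is quoted in the present paper from the companion work \cite{CHH_lin_trans} and is \emph{not} proved here; the paper's own contribution begins with the gradient estimate. So there is no in-paper proof to compare against. Nevertheless, your barrier/maximum-principle strategy is a reasonable reconstruction of the general mechanism, and the intermediate-region part of your argument (the ansatz $KA(\sqrt2-v)/|\tau|^\mu$, the near-cancellation between the Ornstein--Uhlenbeck drift via $yv_y\sim-(2-v^2)/v$ and the zeroth-order coefficient $1+(2-v^2)/(2v^2)$, the leftover of size $(\sqrt2-v)^2/|\tau|^\mu$ matched against \eqref{g_growth_basic_sup_intro}) is at least dimensionally consistent and on the right track. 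Treating the nonlocal term $e^\tau\mathcal Fw$ as a controlled forcing using the upper-lower estimate is also the standard maneuver and is plausible.

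However, there is a concrete gap in the tip-region barrier. Plugging the spatially constant ansatz $\overline W(\tau)=KA|\tau|^{1/2-\mu}$ into the tip operator from \eqref{evolve_inhom_tip_intro}, only the time derivative and the zeroth-order term survive, and both have the \emph{wrong} sign: writing $|\tau|=-\tau$ one gets $-\overline W_\tau=KA(\tfrac12-\mu)|\tau|^{-1/2-\mu}\geq 0$ and $\tfrac12\overline W=\tfrac12 KA|\tau|^{1/2-\mu}>0$, so in fact $L_{\mathrm{tip}}\overline W>0$. A supersolution needs $L_{\mathrm{tip}}\overline W\le -|G|<0$. In other words, the $+\tfrac12 W$ coefficient in the tip equation is a \emph{growing mode}, and a spatially constant positive function is a subsolution, not a supersolution; the $-\overline W_\tau$ contribution is lower order and cannot save it. This is not a minor constant-chasing issue: it means the constant barrier fails identically, and the maximum-principle comparison you describe in the tip region does not close. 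The actual argument must exploit a nontrivial $v$-dependence of the barrier so that the $\tfrac1v W_v - \tfrac v2 W_v$ drift and the diffusion $(1+Y_v^2)^{-1}W_{vv}$ overcome the $+\tfrac12 W$ term (note by contrast that the derivative operator $L_{\mathrm{tip}}'$ used in Section~\ref{sec_grad} has the favorable zeroth-order coefficient $-1/v^2$, which is precisely why the construction in Propositions~\ref{prop_super_sol_collar}--\ref{super_sol_bowl} works there but cannot be transposed verbatim). You would need to build an analogous $v$-dependent tip barrier, modeled on the zoomed-in bowl profile, before your comparison argument has any chance of running.

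Two smaller points. The matching at $v=\theta$ is more delicate than you indicate: the identity $W=-Y_v\,w\circ Y$ (equivalently $X_t/V_t$) carries a sign and a $v$-dependent prefactor, so the super- and sub-barriers must be chosen consistently across the gauge change, not just in order of magnitude. And the $\min(1,v^2|\tau|/\ell^2)$ structure in \eqref{g_growth_basic_sup_intro} reflects the soliton-region singularity near $v\sim\ell|\tau|^{-1/2}$ rather than near $v=\theta$, so the ``local correction $CA\chi(v)|\tau|^{-\mu}$'' you propose would have to be supported near the tip, not near the interface.
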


In particular, Theorem \ref{inner_outer_intro} (inner-outer estimate) serves as a substitute for the shrinker foliation from \cite{ADS1}, which has been crucial for propagating information from the parabolic region to the intermediate and tip region, see e.g. \cite{ADS2,BC,BC2,CHH,CHHW,CHH_translators,DH_shape,DuZhu,CDDHS}.\\

Here, we establish a gradient estimate, which gives sharp $C^1$-control in the tip region. Loosely speaking, our estimate says that $|W_v|\leq CA(1+v|\tau|^{1/2})/|\tau|^{\mu}$ in the tip region. The precise statement is as follows:

\begin{theorem}[gradient estimate]\label{gradient_intro}
Let $\mu\geq 0$. In addition to \eqref{lower_bd_basic_sup_intro} and \eqref{g_growth_basic_sup_intro}, suppose that for all $\tau\in [-\log (h)^{1/2}+1,\tau_0]$ we have
\begin{equation}\label{grad_a2_intro}
|\tau|^{1/2+\mu}\!\!\! \sup_{y \in \left[\ell,Y(\theta,\tau)\right]}\,\, (\sqrt{2}- v)^{-3/2} |g_y(y,\tau)|+|\tau|^{\mu}\sup_{v\leq \theta}\,\, v^2(1+v^2|\tau|^{1/2})^{-1}  |G_v(v,\tau)| \leq A.
\end{equation}
 Then, there is some $C=C(\mu,\tau_0,\ell,\theta)<\infty$, such that for all $\tau\in [-\log(h)^{1/4},\tau_0]$ we get
\begin{equation}
|\tau|^{1/2+\mu}\!\!\! \sup_{y \in \left[\ell,Y(\theta,\tau)\right]}\,\, (\sqrt{2}- v)^{-1/2} |w_y(y,\tau)|
+|\tau|^{\mu}\sup_{v\leq \theta}\,\,(1+v|\tau|^{1/2})^{-1}|W_v(v,\tau)| \leq CA.
\end{equation}
\end{theorem}

Theorem \ref{gradient_intro} (gradient estimate) allows us to control $W_v$ in the tip region and thus serves as a substitute for the quadratic concavity estimate from \cite{ADS2}, which has been crucial for controlling $Y_v$ in the tip region, see e.g. \cite{ADS2,DH_ovals,CHH_translators,CDDHS,CDZ}. Let us also point out that together with interior estimates by virtue of the linearized translator equation the gradient estimate implies a bound for $W_\tau$ as well. Hence, the gradient estimate also serves as substitute Hamilton's Harnack inequality \cite{Hamilton_Harnack}, which is important for controlling $Y_\tau$ in the tip region, see again \cite{ADS2,DH_ovals,CHH_translators,CDDHS,CDZ}.\\

Let us give a brief outline of our approach. Since $W$ satisfies \eqref{evolve_inhom_tip_intro}, its derivative $W_v$ satisfies
\begin{equation}\label{LpWv}
L_{\tip}'[W_v]+e^\tau (\mathcal{F}W)_v=G_v,
\end{equation}
where the linear operator $L_{\tip}'$ is given by the formula
\begin{equation}\label{Ltip_Der}
L_{\tip}'[P]=-P_{\tau}+\frac{P_{vv}}{1+Y_{v}^2}+\left(\frac{1}{v}-\frac{v}{2}-4\frac{Y_{vv}Y_{v}}{(1+Y_{v}^2)^2}\right)P_v-\left(\frac{1}{v^2}+2\left(\frac{Y_{vv}Y_{v}}{(1+Y_{v}^2)^2}\right)_v\right)P.
\end{equation}
To prove our gradient estimate we will construct a suitable supersolution for the operator $L_{\tip}'$. We do this by patching together carefully designed supersolutions in the different regions, specifically the soliton region $v\leq \ell |\tau|^{-1/2}$, the collar region $\ell |\tau|^{-1/2}\leq v \leq \theta$, and the cylindrical region $v\geq \theta$. Since the coefficients of $L_{\tip}'$ depend on $Y$, this requires very precise estimates for the profile function $Y$.\\

In the soliton region, the zoomed in profile function is modelled by the bowl soliton, so we first derive some precise estimates for the bowl soliton. Recall that the bowl soliton is given as graph of rotation of a function $\varphi(r)$, which satisfies $\varphi(r)\sim \tfrac{1}{4}r^2$ for $r\to 0$ and $\varphi(r)\sim \tfrac{1}{2}r^2$ for $r\to \infty$. To capture how the coefficient of the quadratic term transitions from $\tfrac14$ to $\tfrac12$, we consider the macroscopic curvature function $K(r)=\varphi'(r)/r$. It is easy to see that $\tfrac12 \leq K \leq 1$, but to obtain good enough control of the coefficients of $L_{\tip}'$ we have to derive way more precise estimates that capture how exactly $K$ transitions from $\tfrac 12$ to $1$.\\

Finally, to obtain good enough control for the coefficients of $L_{\tip}'$ in the cylindrical region, we derive an enhanced quadratic concavity estimate for the cylindrical profile function $v$. Recall that the fundamental quadratic concavity estimate from Angenent-Daskalopoulos-Sesum \cite{ADS2} says that $(v^2)_{yy}\leq 0$, and a similar quadratic concavity estimate for translators up to exponential errors has been established in \cite{CHH_translators}. Here, building on the intuition that $v$ is well approximated by an ellipse and
pushing further the methods from our paper \cite{CHH_profile}, we improve this to $(v^2)_{yy}\leq -(2-\eps)/|\tau|$ in a suitable region.\\

This paper is organized as follows. In Section \ref{sec_bowl}, we derive the precise profile estimates for the bowl. In Section \ref{sec_enh}, we derive the enhanced quadratic concavity estimate for the cylindrical profile function. Finally, in Section \ref{sec_grad}, we prove the gradient estimate for the linearized translator equation.\\

Throughout this paper, we fix a small constant $\theta>0$, a large constant $\ell<\infty$, and a very negative constant $\tau_0>-\infty$. By convention, these constants can be adjusted at finitely many instances.

\bigskip

\noindent\textbf{Acknowledgments.} We thank the referee for helpful comments.
KC has been supported by the KIAS Individual Grant MG078902, an Asian Young Scientist Fellowship, and the National Research Foundation (NRF) grants  RS-2023-00219980 and RS-2024-00345403 funded by the Korea government (MSIT). RH has been supported by the NSERC Discovery Grant RGPIN-2023-04419. OH has been supported by ISF grant 437/20. This project has received funding from the European Research Council (ERC) under the European Union's Horizon 2020 research and innovation program, grant agreement No 101116390.

\bigskip

\section{Precise profile estimates for the bowl soliton}\label{sec_bowl}

Let $\psi=\varphi'$ be the derivative of the bowl profile function with tip speed $1$, namely the solution of the initial value problem 
\begin{equation}\label{bowl_eq_1}
\frac{\psi'}{1+\psi^2}+\frac{\psi}{r}=1, \qquad \psi(0)=0.
\end{equation}

We are interested in the behaviour of the macroscopic curvature function
\begin{equation}
K(r)=\frac{\psi(r)}{r}.
\end{equation}

To begin with, note that since $r/2$ and $r$ are sub/super solutions to the equation \eqref{bowl_eq_1}, we have
\begin{equation}
\frac{1}{2} \leq K \leq 1.
\end{equation}

Next, we observe that the function $K$ is monotone, namely
\begin{equation}
rK' =  \psi'-K\geq 0.
\end{equation}
Indeed, if we had $\psi'(r_0)<K(r_0)$ at some $r_0>0$ then the linear function $L(r)=K(r_0)r$ would be a strict supersolution to \eqref{bowl_eq_1} on $[0,r_0]$, which is impossible since $\psi(0)=L(0)$ and $\psi(r_0)=L(r_0)$.

Moreover, it is easy to see that $K$ has the asymptotic expansions
\begin{equation}\label{gconvzero}
K(r)=\frac{1}{2}+\frac{r^2}{32}+O(r^4)\,\,\, \textrm{for } r\rightarrow 0\qquad \textrm{and}\qquad K(r)=1-\frac{1}{r^2}+O\left(\frac{1}{r^4}\right) \,\,\, \textrm{for } r\rightarrow \infty,
\end{equation}
see e.g. \cite[Section 2]{CSS}, though these expansions will actually not be used in our proof. For our purpose, we need more quantitative bounds. To state these bounds let us abbreviate
\begin{equation}
F(r,a|\rho,\alpha)=1-\frac{1-\left[1-(1-\alpha)a^2\rho^2\right]e^{-\frac{a^2}{2}(r^2-\rho^2)}}{a^2r^2}.
\end{equation}

\begin{proposition}[quantitative bounds]\label{app_prop}
Given any $h>0$, setting $r_i=i\cdot h$ we have the bounds
\begin{equation}\label{table_eq}
K(r)\geq a_i \,\,\, \textrm{for } r\geq r_i  \quad\textrm{and}\quad K(r)\leq b_i \,\,\, \textrm{for } r\leq r_i,
\end{equation}  
where the numbers $a_i$ and $b_i$ are recursively defined by
\begin{equation}
a_{i+1}=F(r_{i+1},a_i|r_i,a_i),\qquad  a_0=\tfrac{1}{2},
\end{equation}
and
\begin{equation}
\quad b_{i+1}=F(r_{i+1},F(r_{i+1},F(r_{i+1},1|r_i,b_i)|r_i,b_i)|r_i,b_i) ,\qquad  b_0=\tfrac{1}{2}.
\end{equation}
In particular, we have the bounds
\begin{equation}\label{imp_up_bd_init}
K(r) \leq \frac{1}{2}+\frac{r^2}{20}\,\,\,\,\,\, \textrm{for } r\leq 1 \qquad \textrm{and}\qquad K(r) \geq 1-\frac{7}{5r^2} \,\,\,\,\,\, \textrm{for } r\geq \frac{39}{10}.
\end{equation}
\end{proposition}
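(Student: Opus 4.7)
The plan is to reduce the nonlinear bowl equation to a linear first-order inequality for the deficit $h(r) := r - \psi(r) = r(1-K(r)) \ge 0$, integrate it explicitly via an integrating factor, and then iterate. Using $\psi' = (1+\psi^2)(1-\psi/r)$ from \eqref{bowl_eq_1}, differentiation gives
\[
h'(s) = 1 - (1+\psi(s)^2)\, h(s)/s,
\]
so the nonlinearity appears solely through $1+\psi^2$, which is pinched between $1+a^2 s^2$ and $1+b^2 s^2$ whenever one knows $a \le K(s) \le b$. Suppose $K \ge a$ on $[\rho,r]$; then $\psi^2 \ge a^2 s^2$ and the identity becomes the linear inequality $h'+h/s + a^2 s\, h \le 1$. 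Multiplying by the integrating factor $\mu(s)=s\,e^{a^2 s^2/2}$ turns it into $(\mu h)' \le \mu$, and integrating over $[\rho,r]$ using the input $h(\rho) \le \rho(1-\alpha)$ (equivalent to $K(\rho) \ge \alpha$) yields, after translating back via $K = 1 - h/r$, exactly $K(r) \ge F(r, a | \rho, \alpha)$. The analogous computation with reversed inequalities (i.e.\ $K \le b$ on $[\rho,r]$ and $K(\rho) \le \beta$) gives $K(r) \le F(r, b | \rho, \beta)$.

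With the kernel $F$ in hand, the lower-bound recursion is immediate: monotonicity of $K$, established in the preamble, makes the hypothesis $K \ge a_i$ on $[r_i,r_{i+1}]$ automatic from $K(r_i)\ge a_i$, so applying the previous step with $\rho=r_i$ and $\alpha = a = a_i$ gives $K(r_{i+1}) \ge F(r_{i+1}, a_i | r_i, a_i) = a_{i+1}$; monotonicity then propagates this to $K(r) \ge a_i$ on all of $[r_i,\infty)$. The upper-bound recursion is more subtle: the corresponding hypothesis $K \le b$ on $[r_i, r_{i+1}]$ would, by monotonicity, coincide with $K(r_{i+1}) \le b$, which is precisely what we want to prove. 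I break this circularity by iteration. Starting from the crude input $b = 1$ we obtain $K(r_{i+1}) \le c_1 := F(r_{i+1}, 1 | r_i, b_i)$; monotonicity upgrades this to $K \le c_1$ on $[r_i,r_{i+1}]$, so we may reapply the estimate with $b = c_1$, giving $c_2 := F(r_{i+1}, c_1 | r_i, b_i)$, and a third nesting produces $b_{i+1} = F(r_{i+1}, c_2 | r_i, b_i)$, exactly as defined in the statement.

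For the explicit bounds \eqref{imp_up_bd_init} I would run the recursions above with a small enough step $h$, starting from $a_0 = b_0 = \tfrac12$ and tracking the $a_i$, $b_i$ until $r_i$ reaches $1$ (for the upper bound) or $39/10$ (for the lower bound), then verify that the terminal values dominate the claimed linear-quadratic expressions. The leading asymptotics $K = \tfrac12 + \tfrac{r^2}{32} + O(r^4)$ and $K = 1 - r^{-2} + O(r^{-4})$ leave a comfortable margin ($\tfrac{1}{20}>\tfrac{1}{32}$ and $\tfrac{7}{5}>1$), so a fine enough partition closes the estimate. The only genuine analytical step is extracting the closed-form kernel $F$ cleanly enough that the three-fold nesting in the upper-bound recursion already delivers the sharpness required in Sections \ref{sec_enh}--\ref{sec_grad}; everything after is arithmetic bookkeeping calibrating $h$ against the margins left by the asymptotic expansions.
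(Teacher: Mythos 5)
Your proposal is correct and takes essentially the same route as the paper: the paper introduces $F(r,a|\rho,\alpha)$ as the solution of the linearized initial value problem $\tfrac{(rF_a)'}{1+a^2r^2}+F_a=1$, $F_a(\rho)=\alpha$, and invokes ODE comparison, which is precisely what your integrating-factor computation for the deficit $r-\psi$ makes explicit, and the recursion (lower bound via monotonicity, threefold bootstrap $1\to c_1\to c_2\to b_{i+1}$ for the upper bound) is identical. The only minor cosmetic points are the notational clash of reusing $h$ for both the deficit and the step size, the typo $a_i$/$[r_i,\infty)$ where you mean $a_{i+1}$/$[r_{i+1},\infty)$, and that the final explicit bounds in \eqref{imp_up_bd_init} require one further continuous application of the kernel $F$ (e.g.\ $K(r)\leq F(r,e^{-1/2}|0,\tfrac12)$ for $r\leq 1$ and $K(r)\geq F(r,a_{34}|\tfrac{34}{10},a_{34})$ for $r\geq\tfrac{39}{10}$) rather than merely checking terminal grid values.
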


These quantitative bounds are illustrated in Figure \ref{fig:figure_g}.

\begin{figure}
  \centering
  \includegraphics[width=.5\linewidth]{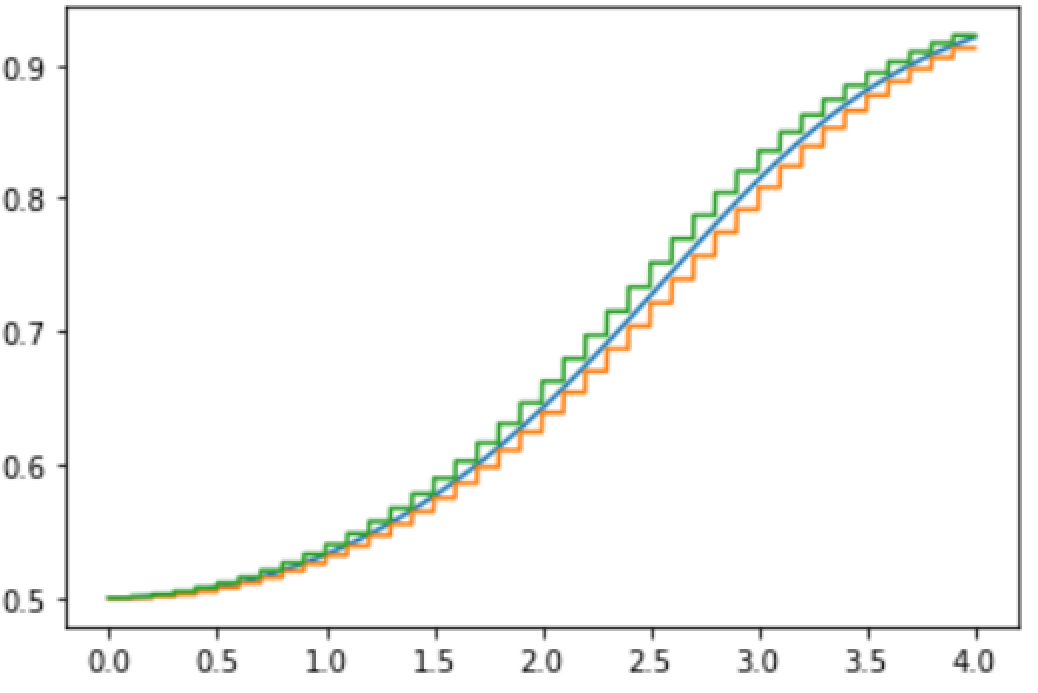}
  \caption{The function $K$ (blue) and the lower and upper bounds (orange and green).}
  \label{fig:figure_g}
\end{figure}

\begin{proof}
To begin with, note that $F_a(r)=F(r,a|\rho,\alpha)$ is the solution of the initial value problem
\begin{equation}
\frac{(rF_a)'}{1+a^2r^2}+F_a=1,\qquad F_a(\rho)=\alpha.
\end{equation}
Now, assuming by induction that \eqref{table_eq} holds for $i$, we will show that it also holds for $i+1$.\\

To prove the lower bound, using the induction hypothesis and the fact that $\psi'\geq 0$ we see that
\begin{equation}
\frac{(rK)'}{1+a_i^2r_i^2}+K\geq 1 \quad(\textrm{for}\,\, r\geq r_i),  \qquad K(r_i)\geq a_i.
\end{equation}
By comparison this implies
\begin{equation}
K(r)\geq F(r,a_i|r_i,a_i) \quad(\textrm{for}\,\, r\geq r_i).
\end{equation}
In particular, this shows that $K(r)\geq a_{i+1}$ for $r\geq r_{i+1}$.\\

To prove the upper bound, note first that since
\begin{equation}
\frac{(rK)'}{1+r^2}+K\leq 1,\qquad K(r_i)\leq b_i,
\end{equation}
by comparison we get
\begin{equation}
K(r)\leq F(r,1|r_i,b_i) \quad(\textrm{for}\,\, r\geq r_i).
\end{equation}
In particular, setting $c_{i+1}=F(r_{i+1},1|r_i,b_i)$ we get $K(r)\leq c_{i+1}$ for $r\leq r_{i+1}$. This implies the sharper differential inequality
\begin{equation}
\frac{(rK)'}{1+c_{i+1}^2 r^2}+K\leq 1 \quad(\textrm{for}\,\, r\leq r_{i+1}),\qquad K(r_i)\leq b_i,
\end{equation}
so using comparison again we infer that
\begin{equation}
K(r)\leq F(r,c_{i+1}|r_i,b_i) \quad(\textrm{for}\,\, r_i\leq r\leq r_{i+1}).
\end{equation}
Repeating this process once more with the improved constant $\tilde{c}_{i+1} := F(r_{i+1},c_{i+1}|r_i,b_i)$ we infer that
\begin{equation}
K(r)\leq F(r,\tilde{c}_{i+1}|r_i,b_i) \quad(\textrm{for}\,\, r_i\leq r\leq r_{i+1}).
\end{equation}
In particular, this shows that $K(r)\leq b_{i+1}$ for all $r\leq r_{i+1}$.\\

Finally, let us explain how \eqref{imp_up_bd_init} follows. Choosing $h=1$ for $r\leq 1$ we get
\begin{equation}
K(r)\leq F(r,e^{-1/2}|0,\tfrac{1}{2})\leq \frac{1}{2}+\frac{r^2}{20}.
\end{equation}
On the other hand, choosing  $h=\tfrac{1}{10}$ we get after $34$ steps (which are done most conveniently with a computer or calculator, but also can be done computing patiently by hand) that
\begin{equation}\label{a34bd}
a_{34} \geq \frac{173}{200}. 
\end{equation}
For $r\geq \tfrac{39}{10}$ this implies
\begin{equation}
K(r)\geq F(r,a_{34}|\tfrac{34}{10},a_{34}) \geq 1-\frac{7}{5r^2},
\end{equation}
and thus concludes the proof of the proposition.
\end{proof}

\bigskip 

As a corollary we obtain important information about the coefficient functions of the model operator $L_0'$ on the bowl soliton, see equation \eqref{model_op_bowl} below. Specifically, these coefficient functions are given by
\begin{equation}
\alpha=\frac{1}{1+r^2K^2},\,\,\, \beta=\frac{1}{r}-\frac{4(1-K)rK}{1+r^2K^2},\,\,\,\gamma=2(1-K)\left(1-\frac{2(1-K)}{1+r^2K^2}\right)-\left(1+\frac{2r^2K^2}{1+r^2K^2}\right)\frac{1}{r^2}.
\end{equation}
Moreover, when constructing a supersolution in \eqref{delta_coeff} we will also encounter their combination
\begin{equation}
\delta=2r^2\alpha+2\left(r-\frac{22}{10}\right)r^2\beta-\left(5-\left(r-\frac{22}{10}\right)^2\right)r^2\gamma.
\end{equation}
The function $\delta(r)$, which starts at $\delta(0)=4/25$, is illustrated in Figure \ref{fig:figure_delta}.

\begin{figure}
  \centering
  \includegraphics[width=.5\linewidth]{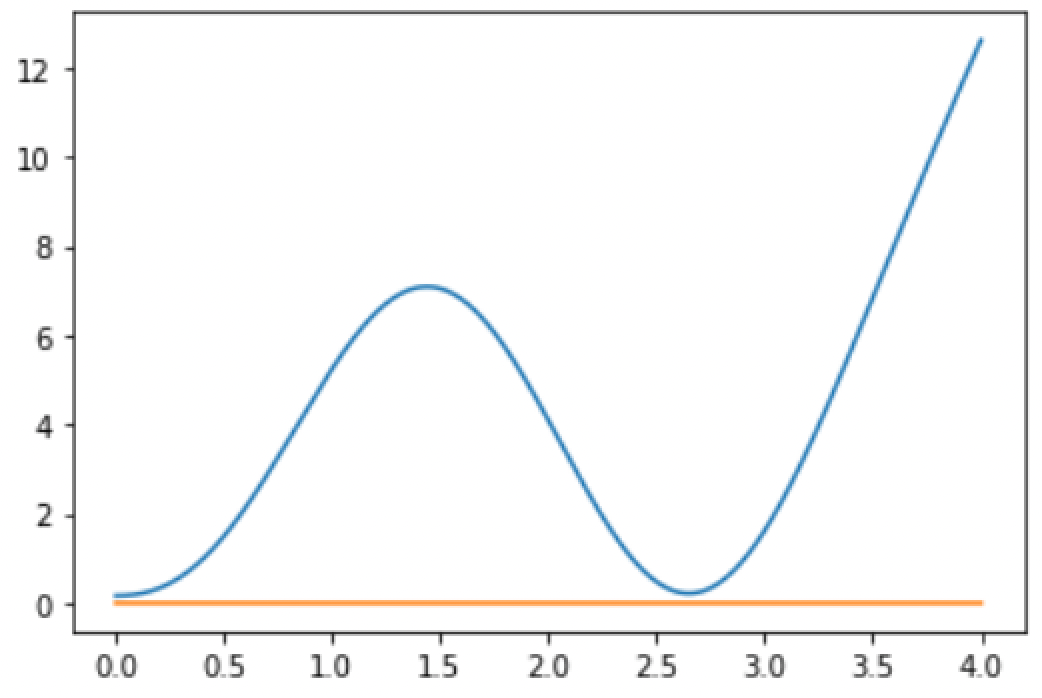}
  \caption{The function $\delta$ (blue) is strictly larger than $0$ (orange).}
  \label{fig:figure_delta}
\end{figure}

\begin{corollary}[coefficient estimate]\label{app_cor}
For $r\geq \tfrac{39}{10}$ we have
\begin{equation}
\alpha(r) \leq \frac{5}{4r^2}, \qquad \beta(r) \geq \frac{5}{9r}, \qquad \gamma(r) \leq -\frac{1}{25r^2},
\end{equation}
and for $r\leq 4$ we have
\begin{equation}\label{L_bd_eq}
\delta(r)\geq\frac{1}{100}.
\end{equation}
\end{corollary}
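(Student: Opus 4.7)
The plan is to reduce each of the four inequalities to an elementary (though tedious) algebraic calculation built on the pointwise bounds for $K$ from Proposition \ref{app_prop}.

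For (i)--(iii), which concern the regime $r \geq 39/10$, I would simply substitute the lower bound $K(r) \geq 1 - 7/(5r^2)$ from Proposition \ref{app_prop}, together with the trivial upper bound $K \leq 1$, into the formulas for $\alpha, \beta, \gamma$. For instance, $\alpha \leq 5/(4r^2)$ is equivalent to $r^2K^2 \geq 4r^2/5 - 1$, and the lower bound on $K$ gives $r^2K^2 \geq r^2 - 14/5 + 49/(25r^2)$, which reduces the desired inequality to $r^2 \geq 9$, easily satisfied. Writing $\eta = 1-K \in [0,7/(5r^2)]$ and expanding $4(1-K)^2 + 2K^2 = 6\eta^2 - 4\eta + 2$, the estimates on $\beta$ and $\gamma$ become polynomial inequalities in $\eta$ and $r$ that are verified by the same substitution; the $\gamma$-bound is slightly more delicate since both $2(1-K)$ and $2K^2/(1+r^2K^2)$ contribute at the order $1/r^2$.

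For (iv), I would split $[0,4]$ into $[39/10,4]$ and $[0,39/10]$. On the former, parts (i)--(iii) apply, and since $r - 22/10 > 0$ and $5-(r-22/10)^2 \geq 0$ throughout this range, all three terms of $\delta$ are nonnegative and the middle term alone, bounded by $10 r (r-22/10)/9$, already far exceeds $1/100$.

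The main obstacle is the subinterval $[0,39/10]$, where $K$ transitions nontrivially from $1/2$ towards $1$ and no simple closed-form bound is available. Here the plan is to discretize using the grid of Proposition \ref{app_prop} with $h = 1/10$: by monotonicity, $K(r) \in [a_{i-1},b_i]$ for $r \in [r_{i-1},r_i]$. Substituting these two-sided bounds into the explicit formulas for $\alpha,\beta,\gamma$---carefully tracking the signs of $\partial_K$ on each term, and noting that the coefficient $2(r-22/10)r^2$ in front of $\beta$ changes sign at $r = 22/10$ while $5-(r-22/10)^2$ stays positive on $[0,4]$---produces a piecewise lower bound for $\delta$ that can be checked cell-by-cell to exceed $1/100$. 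The hard part is precisely this final optimization: since $\beta$ and $\gamma$ depend nonmonotonically on $K$ and the sign of the $\beta$-coefficient flips inside the interval, the worst-case endpoints differ from cell to cell, so one must run through the roughly $39$ cells by a computer-assisted (or patient hand) computation in the spirit of the $34$-step verification of \eqref{a34bd}.
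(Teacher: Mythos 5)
Your proposal is correct and follows essentially the same route as the paper: for $r\geq \tfrac{39}{10}$ you plug the bounds $1-\tfrac{7}{5r^2}\leq K\leq 1$ (which the paper records as $K\geq \tfrac{9}{10}$, via monotonicity of $x\mapsto x/(1+x)$ for the $\gamma$-term) directly into the formulas for $\alpha,\beta,\gamma$, and for the $\delta$-bound you discretize $r\leq 4$ with the $h=\tfrac{1}{10}$ grid of Proposition~\ref{app_prop}, taking worst-case combinations of $a_i,b_{i+1}$ and the cell endpoints to account for the sign change of the $\beta$-coefficient at $r=\tfrac{22}{10}$ — exactly the $\min$-formula the paper writes out — and then verify cell by cell. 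Your minor simplification of peeling off $[\tfrac{39}{10},4]$ via (i)--(iii) is sound (all three terms of $\delta$ are nonnegative there), but it is not substantively different from the paper's uniform cell-by-cell treatment on all of $[0,4]$.
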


\begin{proof}
Since by Proposition \ref{app_prop} (quantitative bounds) for $r\geq \tfrac{39}{10}$ we have $K(r)\geq 1-\tfrac{7}{5r^2}\geq \tfrac{9}{10}$, the bounds for $\alpha$ and $\beta$ readily follow.
Moreover, taking also into account the monotonicity of the function $x\mapsto \frac{x}{1+x}$ we see that 
$\frac{r^2K^2}{1+r^2K^2}\geq \frac{369}{400}$ for $r\geq \tfrac{39}{10}$, and so the bound for $\gamma$ follows as well.
Finally, assuming now $r\leq 4$, note that for $r\in[r_i,r_{i+1}]$ if we know that $a_i\leq K(r)\leq b_{i+1}$ then we get
\begin{multline}
\delta(r)\geq\tfrac{2r_i^2}{1+b_{i+1}^2r_{i+1}^2}+2\min\left\{ \left(r_i-\tfrac{22}{10}\right)r_i^2\underline{\beta_i},(r_{i+1}-\tfrac{22}{10})r_{i+1}^2\underline{\beta_i},(r_i-\tfrac{22}{10})r_i^2\overline{\beta_i} ,(r_{i+1}-\tfrac{22}{10})r_{i+1}^2\overline{\beta_i} \right\}\\
+\left(5-\max\left\{\left(r_i-\tfrac{22}{10}\right)^2,\left(r_{i+1}-\tfrac{22}{10}\right)^2\right\}\right)\left(\left(1+\tfrac{2r_i^2a_i^2}{1+r_i^2a_i^2}\right)\tfrac{1}{r_{i+1}^2}-2(1-a_i) + 4\tfrac{(1-b_{i+1})^2}{1+r_{i+1}^2b_{i+1}^2}\right),
\end{multline}
where
\begin{equation}
\underline{\beta_i}=\tfrac{1}{r_{i+1}}-\tfrac{4b_{i+1}(1-a_{i})r_{i+1}}{1+a_{i}^2r_{i}^2},\qquad \overline{\beta_i}= \tfrac{1}{r_{i}}-\tfrac{4a_i(1-b_{i+1})r_i}{1+b_{i+1}^2r_{i+1}^2}.
\end{equation}
Applying Proposition \ref{app_prop} (quantitative bounds) we conclude that $\delta(r)\geq \tfrac{1}{100}$ for $r\leq 4$.
\end{proof}

\bigskip

\section{An enhanced quadratic concavity estimate for the oval-bowls}\label{sec_enh}

In this section, we derive an enhanced quadratic concavity estimate for the renormalized profile function of nontrivial noncollapsed translators $M\subset\mathbb{R}^4$.

\begin{lemma}[third derivative decay]\label{lem:q_zzz/z.cyl} For every $\eps>0$ there exist $\kappa>0$ and $\tau_\ast>-\infty$, such that if $M$ is $\kappa$-quadratic (see \cite[Definition 1.4]{CHH_translators}) at some time $\tau_0\leq\tau_\ast$, then
\begin{equation}
\sup_{|y|\leq |\tau|^{1/2}}\left|\frac{v_{yyy}}{y}\right| \leq \frac{\eps }{|\tau|}\qquad \mathrm{for}\,\, \tau\leq\tau_0.
\end{equation}
\end{lemma}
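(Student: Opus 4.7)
The plan is to exploit that in the parabolic region the renormalized profile $v$ is quantitatively $C^4$-close to the explicit ellipse approximation $f(y,\tau) := \sqrt{2 - y^2/|\tau|}$, for which one computes by hand (using $f^2 = 2 - y^2/|\tau|$ and $f_{yy} = -2/(|\tau|f^3)$) that
\begin{equation*}
\frac{f_{yyy}}{y} = -\frac{6}{|\tau|^2 f^5},
\end{equation*}
so $|f_{yyy}/y| \leq 6/|\tau|^2 \ll \eps/|\tau|$ on $\{|y|\leq |\tau|^{1/2}\}$ once $\tau_\ast$ is negative enough. The heart of the argument is therefore to show that the error $e := v - f$ satisfies $\|e\|_{C^4} \leq \eta/|\tau|$ in the parabolic region, with $\eta = \eta(\kappa,\tau_\ast)\to 0$.

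The $C^0$-version of this bound I would obtain by propagating the $\kappa$-quadratic assumption backward in $\tau$ using Merle--Zaag-type spectral analysis of the linearized shrinker operator $\cL = \partial_y^2 - \frac{y}{2}\partial_y + 1$ on the Gaussian-weighted $L^2$-space. The $\kappa$-quadratic hypothesis at $\tau_0$ kills the positive-eigenvalue modes while the translator-specific $e^\tau$ inhomogeneity is exponentially small, so the $\cL$-neutral second Hermite coefficient is forced to track $1/|\tau|$ exactly as in the oval/ADS picture; this yields
\begin{equation*}
\sup_{|y|\leq |\tau|^{1/2}} |v^2(y,\tau) - (2 - y^2/|\tau|)| \leq \frac{\eta}{|\tau|}
\end{equation*}
for all $\tau \leq \tau_0$, along the lines of \cite{CHH_translators, CHH_profile}. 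Then I would upgrade this $C^0$ estimate to a $C^4$ estimate on the slightly smaller region $\{|y|\leq |\tau|^{1/2}/2\}$ by applying interior parabolic Schauder theory to the linearized equation that $e$ satisfies, at the unit $(y,\tau)$-scale: the coefficients of the nonlinear equation for $v$ are uniformly smooth and uniformly parabolic since $v$ is bounded between positive constants in this region, and the inhomogeneity (equation-error of $f$ plus the $e^\tau$-term from the translator equation) is $O(1/|\tau|^2)$.

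Finally, since the oval-bowl is $\mathbb{Z}_2$-symmetric in $x_1$, both $v$ and $f$ are even in $y$, so $e_{yyy}(0,\tau)=0$, and Taylor's theorem in $y$ gives $|e_{yyy}(y,\tau)/y| \leq \|e\|_{C^4}$. Combining with the explicit bound for $f_{yyy}/y$ concludes the proof after choosing $\kappa$ small and $\tau_\ast$ very negative. The main obstacle is quantitatively propagating the ellipse approximation backward in $\tau$ with the correct $1/|\tau|$-rate, uniformly on all of $\{|y|\leq |\tau|^{1/2}\}$ rather than just on bounded sets: one has to balance the exponentially small translator correction against the neutral-mode drift of $\cL$, and push the estimate all the way out to $|y|\sim |\tau|^{1/2}$ where $v$ starts transitioning to the collar regime. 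Once this is in hand, the regularity upgrade and the final computation are essentially routine.
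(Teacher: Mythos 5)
Your plan shares one key ingredient with the paper's proof, namely using $\mathbb{Z}_2$-symmetry to reduce $v_{yyy}/y$ to the fourth derivative, but the execution is genuinely different, and there are two real problems. The paper splits $\{|y|\leq |\tau|^{1/2}\}$ into $[L,|\tau|^{1/2}]$ and $[0,L]$: on the outer part it simply quotes the enhanced profile estimates from \cite[Cor.~1.10, Thm.~1.8]{CHH_profile}, and on $[0,L]$ it notes that any interior critical point $y_0$ of $\varphi=y^{-1}v_{yyy}$ satisfies $\varphi(y_0)=v_{yyyy}(y_0)$, while the limit at $y=0$ equals $v_{yyyy}(0)$ by L'H\^opital; both are $\leq \eps/|\tau|$ by the parabolic-region asymptotics, which only need to be invoked on a \emph{fixed bounded} interval $[0,L]$. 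Your Taylor argument, by contrast, requires the fourth-derivative error to be small all the way out to $|y|\sim |\tau|^{1/2}$ in one stroke, which is exactly the uniformity you flag as ``the main obstacle'' and never actually close; the paper bypasses it by the critical-point trick plus the citation.

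The second problem is quantitative: your intermediate claim $\sup_{|y|\leq|\tau|^{1/2}}|v^2(y,\tau)-(2-y^2/|\tau|)|\leq\eta/|\tau|$ with $\eta\to 0$ is false. From the inner parabolic asymptotics $v\approx\sqrt{2}\bigl(1-\tfrac{y^2-2}{4|\tau|}\bigr)$ one gets $v^2-(2-y^2/|\tau|)=\tfrac{2}{|\tau|}+o(|\tau|^{-1})$, so the $C^0$ discrepancy is genuinely of order $2/|\tau|$, not $o(|\tau|^{-1})$. This is a constant-in-$y$ shift, so it would not contaminate $e_{yyyy}$ if the Schauder step were done at the level of seminorms, but as stated the chain ``$\|e\|_{C^0}\leq\eta/|\tau|$ $\Rightarrow$ $\|e\|_{C^4}\leq\eta/|\tau|$ $\Rightarrow$ $|e_{yyy}/y|\leq\eta/|\tau|$'' does not hold. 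In short: the reduction-to-$v_{yyyy}$ idea is right, but you should use the paper's critical-point localization rather than a global Taylor/Schauder argument, and you should lean on \cite{CHH_profile} for the range $y\geq L$ instead of trying to prove a uniform ellipse approximation on the whole of $\{|y|\leq |\tau|^{1/2}\}$ from scratch.
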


\begin{proof}Observe first that thanks to \cite[Corollary 1.10]{CHH_profile}, taking also into account \cite[Theorem 1.8]{CHH_profile}, the estimate holds outside the parabolic region. Namely, there exist $L<\infty$, $\kappa>0$ and $\tau_\ast>-\infty$, such that if $M$ is $\kappa$-quadratic at some time $\tau_0\leq\tau_\ast$, then
\begin{equation}
\sup_{L\leq y \leq |\tau|^{1/2}}\left|\frac{v_{yyy}}{y}\right| \leq \frac{\eps }{|\tau|}\qquad \mathrm{for}\,\, \tau\leq\tau_0.
\end{equation}
Also, thanks to the $\mathbb{Z}_2$-symmetry we have $v_{yyy}=0$ at $y=0$, so L'Hopital's rule yields
\begin{equation}
\lim_{y\to 0}\frac{v_{yyy}}{y}= v_{yyyy}|_{y=0},
\end{equation}
which is in particular bounded in absolute value by $\eps/|\tau|$ thanks to the asymptotics in the parabolic region. Finally, if at any $\tau\leq \tau_0$ the function $\varphi=y^{-1}v_{yyy}$ has a critical point at $y_0\in (0,L)$, then
\begin{equation}
\varphi(y_0,\tau)=v_{yyyy}(y_0,\tau),
\end{equation}
which is bounded again in absolute value by $\eps/|\tau|$. This proves the lemma.
\end{proof}

\begin{proposition}[enhanced third derivative estimate]\label{thm:enhanced.third}
For every $\theta>0$ there exist $C<\infty$, $\kappa>0$ and $\tau_\ast>-\infty$, such that if $M$ is $\kappa$-quadratic at some time $\tau_0\leq\tau_\ast$, then
\begin{equation}
\sup_{v(y,\tau)\geq \theta}\frac{|(v^2)_{yyy}|}{|y|(1+|y|)}\leq \frac{C}{|\tau|^2}\qquad \mathrm{for}\,\, \tau\leq\tau_0.
\end{equation}
\end{proposition}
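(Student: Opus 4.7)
By the $\mathbb{Z}_2$-symmetry I restrict to $y\ge 0$ and write
\[
(v^2)_{yyy}=6\, v_y v_{yy}+2 v\, v_{yyy}.
\]
The plan is to split the region $\{v(y,\tau)\ge\theta\}$ into a parabolic part $\{0\le y\le L\}$ and a collar part $\{L\le y\le Y(\theta,\tau)\}$, handle each by different methods, and match at $y=L$. On the parabolic part I would use the precise quadratic expansion of $v$ established under the $\kappa$-quadratic hypothesis in \cite{CHH_translators,CHH_profile}: this gives $v_y=-y/(\sqrt{2}|\tau|)+O(|\tau|^{-2})$ and $v_{yy}=-1/(\sqrt{2}|\tau|)+O(|\tau|^{-2})$ on $|y|\le L$, so that $6 v_y v_{yy}=O(y/|\tau|^{2})$; and by the same expansion one has $v_{yyy}=|\tau|^{-2}\Psi(y)+o(|\tau|^{-2})$ with $\Psi$ smooth and $\Psi(0)=0$ by $\mathbb{Z}_2$-symmetry, whence a L'Hopital argument as in the proof of Lemma \ref{lem:q_zzz/z.cyl} yields $|v_{yyy}|\le Cy/|\tau|^{2}$. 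Combined with $v\le\sqrt{2}$ this gives the claimed bound for $|y|\le L$.

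In the collar region I would proceed by maximum principle. Differentiating the renormalized equation for $v$ three times yields an evolution equation for $P=(v^2)_{yyy}$ of the schematic form
\[
-P_\tau+\frac{P_{yy}}{1+v_y^2}+b(y,\tau)\, P_y+c(y,\tau)\, P=\mathcal{R}[v,v_y,v_{yy}],
\]
whose coefficients $b,c$ and source $\mathcal{R}$ are controlled through the asymptotic decay of $v_y, v_{yy}$ established in \cite{CHH_profile,CHH_translators}. I would then use
\[
\Phi(y,\tau)=\frac{C\, y\,(1+|y|)}{|\tau|^2}
\]
as a barrier: the parabolic-region bound provides the Dirichlet data at $y=L$, the tip asymptotics from \cite{CHH_translators,CDDHS} provide the Dirichlet data at $y=Y(\theta,\tau)$, and the $\kappa$-quadratic hypothesis supplies the initial data at $\tau=\tau_0$. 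The comparison principle applied to $P-\Phi$ then delivers the desired upper bound throughout the collar.

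The main obstacle is verifying that the source term $\mathcal{R}$, a nonlinear combination of products of up to second-order $y$-derivatives of $v$, is dominated by the action of the linear operator on $\Phi$: this forces one to carry sharper-than-standard asymptotic estimates for $v_y, v_{yy}$ uniformly across the entire collar $\{L\le y\le Y(\theta,\tau)\}$, and to choose the constants $L$ and $C$ consistently between the two regions. The guiding intuition is that $v$ is very closely approximated by an ellipse, so that its cubic and higher deviations from the ellipse decay like $|\tau|^{-2}$; this is exactly what makes the shape $\Phi=Cy(1+|y|)/|\tau|^2$ the natural barrier.
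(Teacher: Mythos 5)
Your proposal follows a genuinely different route from the paper, but it has a gap at the very point where the proposition is hardest. You treat the parabolic region $|y|\leq L$ as essentially already known, asserting that the quadratic expansion gives $v_{yyy}=|\tau|^{-2}\Psi(y)+o(|\tau|^{-2})$ and hence $|v_{yyy}|\leq Cy/|\tau|^{2}$. This is not justified by the cited asymptotics, and in fact it is close to circular: the $\kappa$-quadratic hypothesis together with the refined asymptotics from \cite{CHH_profile} only yield $v_{yyy}/y=o(|\tau|^{-1})$ on compact $y$-sets --- which is precisely Lemma~\ref{lem:q_zzz/z.cyl}, and which via your L'Hopital step would bound the $2v\,v_{yyy}$ contribution to $(v^2)_{yyy}$ by $o(y/|\tau|)$, a whole power of $|\tau|$ worse than what Proposition~\ref{thm:enhanced.third} asserts. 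Upgrading the $o(|\tau|^{-1})$ rate to $O(|\tau|^{-2})$ inside the parabolic region is the actual content of the proposition; in the intermediate region $\theta\leq v\leq\sqrt{2}-\theta$ the bound is already supplied by \cite[Corollary 1.10]{CHH_profile}, so your decomposition has the easy and hard regions reversed.

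The paper's proof avoids this by working with $q=v^2$ in the self-similar variable $z=y/|\tau|^{1/2}$ and constructing the test function $\psi=q_{zzz}/z^{1-\delta}-\eta(|\tau|^{1/2}z)$ on $0<z\leq1$, with $\eta$ a monotone cutoff with $\eta''\leq 1/2$. Differentiating the evolution equation for $q_{zz}$ and controlling the coefficients $A,B,R$, one shows that at any positive interior maximum of $\psi$ one has $\psi_\tau\leq -1$. The punchline is then a backward-in-time growth argument: if $\bar\psi(\tau_1)=\sup_{|z|\leq1}\psi(\cdot,\tau_1)>0$ for some $\tau_1\leq\tau_0$, then $\bar\psi(\tau)\geq|\tau-\tau_1|$ for all $\tau\leq\tau_1$, contradicting the a priori decay from Lemma~\ref{lem:q_zzz/z.cyl}. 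This uses Lemma~\ref{lem:q_zzz/z.cyl} not as Dirichlet/initial data but as a global growth obstruction, so no initial-time control at $\tau_0$ is needed. It also replaces the direct barrier comparison $P-\Phi$ of your collar step --- whose supersolution verification you correctly flag as the main obstacle, since the zeroth-order coefficient has no favorable sign --- by the $z^{1-\delta}$-weighted ratio, which makes the drift terms exactly cooperate.

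A secondary issue in your plan: you invoke ``tip asymptotics from \cite{CHH_translators,CDDHS}'' to supply Dirichlet data at $y=Y(\theta,\tau)$, but this boundary lies in the intermediate region, not the tip region, and the correct reference there is the enhanced profile estimates of \cite{CHH_profile}, which already give the full Proposition statement for $\theta\leq v\leq\sqrt2-\theta$ and render a separate comparison argument in the collar unnecessary.
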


\begin{proof}
Observe first that thanks to \cite[Corollary 1.10]{CHH_profile}, taking also into account \cite[Theorem 1.8]{CHH_profile}, 
the estimate holds for $\theta\leq v(y,\tau)\leq \sqrt{2}-\theta$. More precisely, there exist constants $D<\infty$, $\kappa>0$ and $\tau_\ast>-\infty$, such that $q(z,t)=v(|\tau|^{1/2}z,\tau)^2$, assuming $\kappa$-quadraticity at $\tau_0\leq\tau_\ast$, satisfies
\begin{equation}\label{q_zzz_bdd}
\sup_{q(z,\tau)\geq \theta^2}\left(|q_{zz}|+ |\tau|^{-1/2}|q_{zzz}|\right)\leq D \qquad \mathrm{for}\,\, \tau\leq\tau_0.
\end{equation}
Now, given any $\delta>0$, in the region $\{ 0 < z\leq 1\}$ we consider the function
\begin{equation}
\psi(z,\tau)=\frac{q_{zzz}}{z^{1-\delta}}\pm \eta(|\tau|^{1/2}z),
\end{equation}
where we fix a smooth monotone function $\eta$ with $0\leq \eta''\leq 1/2$ satisfying $\eta(y)= 2$ for $y\in [0,5]$ and $\eta(y)=Dy$ for $y \geq 5D$.
In particular, note that $\lim_{z\to 0} \psi(z,\tau) =\pm 2$ and $\pm \psi(1,\tau) \geq 0$.

To compute the evolution equation for $\psi$, note that similarly as in \cite[Equation (73)]{CHH_profile} we have
\begin{multline}
q_{zz\tau}= \frac{4qq_{zzzz}-4q_zq_{zzz}}{4q|\tau|+q_z^2}-\frac{z}{2} \left(1+\frac{1}{|\tau|}\right)q_{zzz}-\frac{q_{zz}}{|\tau|}
+\frac{4(q_z^2-2qq_{zz})(2|\tau|+q_{zz})}{(4q|\tau|+q_z^2)^2} q_{zz} \\
+\frac{12q_zq_{zzz}(q_z^2-2qq_{zz}) }{(4q|\tau|+q_z^2)^2}-\frac{16q_z^2(q_z^2-2qq_{zz})(2|\tau|+q_{zz})^2}{(4q|\tau|+q_z^2)^3}+E,     
\end{multline}
where the error term $E$ comes from the $e^\tau \mathcal{N}v$-term in the translator profile evolution from \cite[Proposition 5.3]{CHH_translators}, and satisfies the estimate $|E|+z^{-1}|E_z| \leq e^{\tfrac{99}{100}\tau}$. Differentiating the above equation again yields
\begin{equation}
q_{zzz\tau}=\frac{4qq_{zzzzz}}{4q|\tau|+q_z^2}-\frac{z}{2} \left(1+\frac{1}{|\tau|}\right)q_{zzzz}+Aq_{zzzz}-\frac{1}{2}\left(1+\frac{3}{|\tau|}\right)q_{zzz}+Bq_{zzz}+R,
\end{equation}
for certain coefficients $A$, $B$ and $R$, which are explicitly given by
\begin{equation}
A=-\frac{8qq_z(2|\tau|+q_{zz})}{(4q|\tau|+q_z^2)^2}+\frac{12q_z(q_z^2-2qq_{zz}) }{(4q|\tau|+q_z^2)^2},
\end{equation}
and 
\begin{align}
B=&-\frac{4q_{zz}}{4q|\tau|+q_z^2}+\frac{8q_z^2(2|\tau|+q_{zz})}{(4q|\tau|+q_z^2)^2}-\frac{8q(2|\tau|+q_{zz})q_{zz}}{(4q|\tau|+q_z^2)^2}+\frac{8(q_z^2-2qq_{zz})(|\tau|+q_{zz})}{(4q|\tau|+q_z^2)^2}\nonumber \\
&+\frac{12q_{zz}(q_z^2-2qq_{zz}) }{(4q|\tau|+q_z^2)^2}-\frac{24qq_zq_{zzz}}{(4q|\tau|+q_z^2)^2}-\frac{48q_z^2(q_z^2-2qq_{zz})(2|\tau|+q_{zz}) }{(4q|\tau|+q_z^2)^3}\\
&+\frac{32qq_z^2(2|\tau|+q_{zz})^2}{(4q|\tau|+q_z^2)^3}-\frac{32q_z^2(q_z^2-2qq_{zz})(2|\tau|+q_{zz})}{(4q|\tau|+q_z^2)^3},\nonumber
\end{align}
and
\begin{align}
R=-\frac{48q_zq_{zz}(q_z^2-2qq_{zz})(2|\tau|+q_{zz})^2}{(4q|\tau|+q_z^2)^3}+\frac{96q_z^3(q_z^2-2qq_{zz})(2|\tau|+q_{zz})^3}{(4q|\tau|+q_z^2)^4}+E_z.
\end{align}
Using \cite[Theorem 1.8]{CHH_profile} and \eqref{q_zzz_bdd} for $|z|\leq 1$ we can estimate
\begin{align}
&|A| \leq Cz|\tau|^{-1}, && |B|\leq C|\tau|^{-1}, && |R| \leq Cz|\tau|^{-1}.
\end{align}
Thus, for $|z|\leq 1$ the evolution equation for $q_{zzz}$ takes the form
\begin{equation}
q_{zzz\tau} = \frac{4qq_{zzzzz}}{4q|\tau|+q_z^2}-\frac{z}{2} \left(1+O(|\tau|^{-1})\right)q_{zzzz}-\frac{1}{2}\left(1+O(|\tau|^{-1})\right)q_{zzz}+z\,O(|\tau|^{-1}).
\end{equation}
To proceed, note that setting $\varphi=z^{\delta-1}q_{zzz}$ for $z\neq 0$ we have
 \begin{align}
& q_{zzzz}=(1-\delta)z^{-\delta}\varphi+z^{1-\delta}\varphi_z, && q_{zzzzz}=-\delta(1-\delta)z^{-1-\delta} \varphi+ 2(1-\delta)z^{-\delta}\varphi_z+z^{1-\delta}\varphi_{zz}.
 \end{align}
Hence, in $\{0<z \leq 1\}\cap \{ \mp \varphi \geq 0\}$ we obtain
\begin{equation}
\mp \varphi_\tau \leq \mp \frac{4q(\varphi_{zz}+2(1-\delta)z^{-1}\varphi_z) }{4q|\tau|+q_z^2} \pm \frac{1}{2} \left(1+O(|\tau|^{-1})\right)((1-\delta)\varphi+z\varphi_z) \pm \left(\frac{1}{2}-\frac{C}{|\tau|}\right)\varphi+ \frac{C}{|\tau|}.
\end{equation}
Also, note that
\begin{align}
& \psi_\tau=\varphi_\tau\mp \tfrac{1}{2} z|\tau|^{-\frac{1}{2}}\eta', && \psi_z=\varphi_z\pm |\tau|^{\frac{1}{2}}\eta', && \psi_{zz}=\varphi_{zz}\pm |\tau|\eta''.
\end{align}
Hence, if $\mp \psi(\cdot,\tau_1)$ for some $\tau_1\leq \tau_0$ attains a positive local maximum at some $z_1\in (0,1)$, then
\begin{align}
\mp \psi_\tau(z_1,\tau_1)\leq  \eta''+ \tfrac52\tfrac{\eta'}{z_1|\tau_1|^{\frac{1}{2}}} - \left(\tfrac{1}{2} -\tfrac{C}{|\tau_1|}\right)((1-\delta)\eta+z_1|\tau_1|^{\frac{1}{2}}\eta')-\tfrac{1}{2}\left(1-\tfrac{C}{|\tau_1|}\right) \eta + \tfrac{C}{|\tau_1|} \leq -1.
\end{align}
Consequently, setting $\bar{\psi}(\tau)=\sup_{|z|\leq 1} \psi(z,\tau)$, if $\mp \bar{\psi}(\tau_1)> 0$ for some $\tau_1\leq \tau_0$, then we would get $\mp \bar{\psi}(\tau)\geq |\tau-\tau_1|$ for all $\tau\leq \tau_1$, contradicting
Lemma \ref{lem:q_zzz/z.cyl} (third derivative decay). Since $\delta>0$ was arbitrary, this shows that 
\begin{equation}
\frac{|q_{zzz}|}{z} \leq \eta(|\tau|^{1/2}z)\leq C(1+|\tau|^{1/2}z)
\end{equation}
for all $\tau\leq \tau_0$ and $0<z\leq 1$, which concludes the proof.
\end{proof}
 
\begin{corollary}[enhanced quadratic concavity estimate]\label{cor:improved.q_zz}
For every $\eps>0$, there exist $\delta>0$, $\kappa>0$ and $\tau_\ast>-\infty$, such that if $M$ is $\kappa$-quadratic at some time $\tau_0\leq\tau_\ast$, then
\begin{equation}
\sup_{ |y|\leq \delta|\tau|^{1/3}} (v^2)_{yy} \leq -\frac{2-\varepsilon }{|\tau|}\qquad \mathrm{for}\,\, \tau\leq\tau_0.
\end{equation}
\end{corollary}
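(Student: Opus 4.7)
The plan is to integrate the enhanced third derivative estimate of Proposition \ref{thm:enhanced.third} outward from the axis of symmetry, using the $\kappa$-quadraticity hypothesis to pin down a sharp initial value at $y=0$. Since $v$ is even in $y$, so is $(v^2)_{yy}$, and it suffices to treat $y\geq 0$. The identity
\[
(v^2)_{yy}(y,\tau)=(v^2)_{yy}(0,\tau)+\int_0^y (v^2)_{yyy}(s,\tau)\, ds,
\]
combined with $(v^2)_{yyy}(0,\tau)=0$ (by $\mathbb{Z}_2$-symmetry), reduces everything to a one-dimensional integration along $y$.

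For the starting value, the asymptotic expansion of the renormalized profile in the parabolic region established in \cite{CHH_translators,CHH_profile} under $\kappa$-quadraticity yields $v(y,\tau)^2=2-(y^2-2)/|\tau|+o(|\tau|^{-1})$ uniformly on $|y|\leq L$. For $\kappa$ sufficiently small and $\tau_0$ sufficiently negative, this supplies both the sharp initial bound $(v^2)_{yy}(0,\tau)\leq -(2-\eps/2)/|\tau|$ and the lower bound $v(y,\tau)\geq 1$ throughout $|y|\leq \delta|\tau|^{1/3}$, since $\delta^2|\tau|^{-1/3}\to 0$ as $\tau\to-\infty$. Thus Proposition \ref{thm:enhanced.third}, applied with its internal parameter taken to be $1$, is valid on the whole interval $[0,\delta|\tau|^{1/3}]$.

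Invoking the proposition gives $(v^2)_{yyy}(s,\tau)\leq Cs(1+s)/|\tau|^2$ on this interval, so that
\[
\int_0^y (v^2)_{yyy}(s,\tau)\, ds\leq \frac{C}{|\tau|^2}\Bigl(\tfrac{1}{2}y^2+\tfrac{1}{3}y^3\Bigr)\leq \frac{C\delta^2}{|\tau|^{4/3}}+\frac{C\delta^3}{|\tau|}\qquad\text{for}\ 0\leq y\leq \delta|\tau|^{1/3}.
\]
Choosing $\delta$ so small that $C\delta^3\leq \eps/4$, and then $\tau_\ast$ so negative that $C\delta^2|\tau|^{-1/3}\leq \eps/4$ for all $\tau\leq \tau_\ast$, we combine with the initial value to obtain $(v^2)_{yy}(y,\tau)\leq -(2-\eps)/|\tau|$ on $|y|\leq \delta|\tau|^{1/3}$, which is the desired bound.

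The integration itself is essentially elementary once Proposition \ref{thm:enhanced.third} is in hand; the main subtlety is extracting the quantitatively sharp initial value $(v^2)_{yy}(0,\tau)\leq -(2-\eps/2)/|\tau|$ from the $\kappa$-quadraticity hypothesis, which requires combining the known $C^0$-closeness of $v$ to its quadratic ansatz with local parabolic regularity to promote it to a sharp pointwise bound on the second derivative at the tip. Should this step prove cumbersome, one can equally well start the integration at some fixed radius $y=L$ where the standard parabolic-region asymptotics already yield the sharp value of $(v^2)_{yy}$ directly, and absorb the negligible extra integral on $[0,L]$ into the error budget.
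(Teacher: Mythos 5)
Your argument is correct and follows essentially the same route as the paper: use the parabolic-region asymptotics to pin down the sharp starting value for $(v^2)_{yy}$, then integrate the one-sided bound on $(v^2)_{yyy}$ from Proposition \ref{thm:enhanced.third}, with the $\delta^3$-term controlled by taking $\delta$ small and the $\delta^2|\tau|^{-1/3}$-term absorbed by taking $\tau_\ast$ negative. The only cosmetic difference is that the paper works in the variable $z=y/|\tau|^{1/2}$ and starts the integration at $z=|\tau|^{-1/2}$ (equivalently $y=1$), whereas you start at $y=0$; your closing remark already notes that starting at a fixed $y=L$ sidesteps the (harmless) issue of promoting $C^0$ closeness to a $C^2$ bound at the tip.
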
 

\begin{proof}Thanks to the standard asymptotics in the parabolic region we have
\begin{equation}
\sup_{|z|\leq |\tau|^{-1/2}} q_{zz}\leq -2+\frac{\eps}{2}.
\end{equation}
On the other hand, thanks to Proposition \ref{thm:enhanced.third} (enhanced third derivative estimate) we get
\begin{equation}
\sup_{|\tau|^{-1/2}\leq |z|\leq \delta|\tau|^{-1/6}}q_{zz}(z,\tau)\leq q_{zz}(|\tau|^{-1/2},\tau)+C \delta^3.
\end{equation}
Combining these two estimates, the assertion follows.
\end{proof}

\bigskip

\section{The gradient estimate for the linearized translator equation}\label{sec_grad}

To prove the gradient estimate, we will construct a suitable supersolution for the operator
\begin{equation}\label{Ltip_Der}
L_{\tip}'[P]=-P_{\tau}+\frac{P_{vv}}{1+Y_{v}^2}+\left(\frac{1}{v}-\frac{v}{2}-4\frac{Y_{vv}Y_{v}}{(1+Y_{v}^2)^2}\right)P_v-\left(\frac{1}{v^2}+2\left(\frac{Y_{vv}Y_{v}}{(1+Y_{v}^2)^2}\right)_v\right)P.
\end{equation}
Note that since $W$  satisfies the linearized translator equation in tip gauge, we have
\begin{equation}\label{LpWv}
L_{\tip}'[W_v]+e^\tau (\mathcal{F}W)_v=G_v.
\end{equation}
We begin by constructing a supersolution in the collar region:

\begin{proposition}[supersolution in collar region]\label{prop_super_sol_collar} 
If $A<\infty$ is a sufficiently large constant, then for $\tau\leq \tau_0$ in the collar region $v\in [\ell/\sqrt{|\tau|},\theta]$ we have
\begin{equation}
L_{\tip}'\left[A+(v-v^2)|\tau|^{1/2}\right] \leq -|\tau|^{1/2}-\frac{1}{v^2}.
\end{equation} 
\end{proposition}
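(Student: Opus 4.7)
The plan is to verify the differential inequality by direct substitution of $P(v,\tau)=A+(v-v^2)|\tau|^{1/2}$ into \eqref{Ltip_Der} and organizing the resulting four contributions. The derivatives are elementary: $-P_\tau=(v-v^2)/(2|\tau|^{1/2})$, $P_v=(1-2v)|\tau|^{1/2}$ and $P_{vv}=-2|\tau|^{1/2}$. Of the four terms produced by substitution, the $-P_\tau$ contribution and the $P_{vv}/(1+Y_v^2)$ contribution are harmless lower-order pieces. The real content lies in how the drift coefficient $\tfrac{1}{v}-\tfrac{v}{2}-4Y_{vv}Y_v/(1+Y_v^2)^2$ hits $P_v$ and how the zeroth-order coefficient $-(1/v^2+2(Y_{vv}Y_v/(1+Y_v^2)^2)_v)$ hits $P$.

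For this, I would first derive precise asymptotics for the coefficients throughout the collar. In the collar $v\in[\ell|\tau|^{-1/2},\theta]$ the cylindrical profile $v(y,\tau)$ is well approximated by the ellipse $v^2+y^2/|\tau|\approx 2$, with quantitative error estimates drawn from the authors' previous profile analyses in \cite{CHH_translators,CHH_profile}; inverting gives $Y_v\approx -v\sqrt{|\tau|/(2-v^2)}$ and $Y_{vv}\approx -2\sqrt{|\tau|}/(2-v^2)^{3/2}$. Since $v^2|\tau|\geq\ell^2$ is large throughout the collar, one has $1+Y_v^2=Y_v^2(1+O(\ell^{-2}))$, and consequently
\begin{equation*}
\frac{Y_{vv}Y_v}{(1+Y_v^2)^2}=\frac{2}{v^3|\tau|}\bigl(1+O(\ell^{-2})\bigr),\qquad \left(\frac{Y_{vv}Y_v}{(1+Y_v^2)^2}\right)_v=-\frac{6}{v^4|\tau|}\bigl(1+O(\ell^{-2})\bigr).
\end{equation*}
With these in hand, the crucial cancellation becomes visible: the $(1/v)(1-2v)|\tau|^{1/2}$ piece of the $P_v$ term contributes $|\tau|^{1/2}/v-2|\tau|^{1/2}$, while $-P/v^2$ in the zeroth-order term contributes $-A/v^2-|\tau|^{1/2}/v+|\tau|^{1/2}$. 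The $|\tau|^{1/2}/v$ pieces cancel exactly, and the remaining sum is precisely $-|\tau|^{1/2}-A/v^2$; it is the $-v^2$ correction we inserted in the definition of $P$ that produces the $-|\tau|^{1/2}$ on the right-hand side.

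Collecting everything — including the $O(\ell^{-2})$ relative errors in the coefficients, the lower-order pieces from Term 2, and the subleading $-v|\tau|^{1/2}/2+v^2|\tau|^{1/2}-16(1-2v)/(v^2|\tau|^{1/2})$ pieces coming from Term 3 — one arrives at a bound of the form $L'_{\tip}[P]\leq -|\tau|^{1/2}-A/v^2+C\ell^{-2}/v^2+C|\tau|^{-1/2}$. Choosing $\ell$ large first (as permitted by the convention) and then $A$ large enough that $A-C\ell^{-2}\geq 1$ gives the desired $L'_{\tip}[P]\leq -|\tau|^{1/2}-1/v^2$. The main obstacle is securing the sharp uniform control of $(Y_{vv}Y_v/(1+Y_v^2)^2)_v$ — a third-order quantity for the profile — throughout the collar, and especially at its inner endpoint $v\sim \ell|\tau|^{-1/2}$, where the ellipse approximation must be matched against the bowl-soliton behaviour analyzed in Section \ref{sec_bowl}; the required profile estimates come from the authors' prior work, refined where necessary using the tools developed in Section \ref{sec_enh}.
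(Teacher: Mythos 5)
Your proposal is correct and follows essentially the same route as the paper: direct substitution of $P=A+(v-v^2)|\tau|^{1/2}$, identification of the exact cancellation $\tfrac{1}{v}P_v-\tfrac{1}{v^2}P=-|\tau|^{1/2}-A/v^2$ (with the $-v^2$ correction supplying the $-|\tau|^{1/2}$), and a profile estimate on $\big(Y_{vv}Y_v/(1+Y_v^2)^2\big)_v$ of the kind proved in \cite{CHH_profile} to control the remaining zeroth-order term, after which one fixes $\ell$ large and then $A$ large. The paper's version is slightly leaner in that it simply observes $P_{vv}\leq 0$, $-\tfrac{v}{2}P_v\leq 0$ and $-Y_{vv}Y_vP_v\leq 0$ and discards these good-sign contributions, rather than computing ellipse asymptotics for all coefficients; also note a small slip in your subleading term, which should read $-8(1-2v)/(v^3|\tau|^{1/2})$ rather than $-16(1-2v)/(v^2|\tau|^{1/2})$, but since it is nonpositive in the collar it does not affect the argument.
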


\begin{proof}
Setting $P=A+(v-v^2)|\tau|^{1/2}$, let us first observe that three terms have the good sign, specifically
\begin{equation}
P_{vv}\leq 0,\qquad -vP_v\leq 0,\qquad -Y_{vv}Y_v P_v\leq 0.
\end{equation}
Also note that
\begin{equation}
-P_{\tau}\leq \frac{1}{|\tau|^{1/2}}.
\end{equation}
Moreover, a direct computation shows that
\begin{equation}
\frac{1}{v}P_v-\frac{1}{v^2}P=-|\tau|^{1/2}-\frac{A}{v^2}.
\end{equation}
Finally, since by the enhanced profile estimates from \cite[Corollary 1.10]{CHH_profile} we have
\begin{equation}
\left|\left(\frac{Y_{vv}Y_v}{(1+Y_{v}^2)^2}\right)_v\right| \leq \frac{C}{v^3|\tau|^{1/2}},
\end{equation}
we can estimate
\begin{equation}
\left|\left(\frac{Y_{vv}Y_v}{(1+Y_{v}^2)^2}\right)_vP\right| \leq\frac{C(1+A/\ell)}{v^2}.
\end{equation}
Combining the above facts, the assertion follows.
\end{proof}

Next, we will construct a suitable supersolution on the bowl soliton for the model operator
\begin{align}\label{model_op_bowl}
L'_0[p]=\frac{p_{rr}}{1+\varphi_r^2}+\left(\frac{1}{r}-4\frac{\varphi_{rr}\varphi_{r}}{(1+\varphi_r^2)^2}\right)p_{r}-\left(\frac{1}{r^2}+2\left(\frac{\varphi_{rr}\varphi_{r}}{(1+\varphi_{r}^2)^2}\right)_{r}\,\,\right)p,
\end{align}
where $\varphi=\varphi(r)$ denotes the profile function of the bowl soliton with tip speed $1$. 

\begin{proposition}[supersolution on bowl]\label{super_sol_bowl}
There exists a smooth positive function $p=p(r)$ such that 
\begin{equation}
L'_0[p] \leq -\frac{1}{r^2}.
\end{equation}
Moreover, we can arrange that $p(r)$ is constant for $r$ sufficiently large.
\end{proposition}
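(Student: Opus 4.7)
The plan is to patch an inner quadratic supersolution with an outer constant supersolution via a smooth cutoff. Specifically, I would define
\begin{equation*}
p(r)=A\,\chi(r)\bigl(5-(r-\tfrac{22}{10})^2\bigr)+D,
\end{equation*}
where $\chi\colon[0,\infty)\to[0,1]$ is a smooth non-increasing cutoff with $\chi\equiv 1$ on $[0,\tfrac{39}{10}]$ and $\chi\equiv 0$ on $[4,\infty)$, and $A,D>0$ are constants to be chosen. By construction $p$ is smooth, satisfies $p\geq D>0$, and equals the constant $D$ for $r\geq 4$.

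On the inner region $[0,\tfrac{39}{10}]$ the cutoff is $1$, so $p=Ap_0+D$ with $p_0(r)=5-(r-\tfrac{22}{10})^2$. A direct computation using the definitions of $\alpha,\beta,\gamma$ gives $L'_0[Ap_0]=-A\delta(r)/r^2$ with $\delta$ as in Corollary~\ref{app_cor}. The crucial lower bound $\delta\geq \tfrac{1}{100}$ on $[0,4]$ combined with $\gamma\leq 0$ on $[0,\tfrac{39}{10}]$ (which can be read off from Proposition~\ref{app_prop} since $\gamma r^2\to -1$ at the origin and the sharp lower bounds for $K$ prevent $\gamma$ from turning positive along the interval) then yields $L'_0[p]\leq -A/(100\,r^2)+\gamma D\leq -1/r^2$ as soon as $A\geq 100$. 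On the outer region $[4,\infty)$, $p=D$ is constant and Corollary~\ref{app_cor} gives $L'_0[D]=\gamma D\leq -D/(25\,r^2)\leq -1/r^2$ for any $D\geq 25$.

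The main difficulty is the transition region $[\tfrac{39}{10},4]$, where Leibniz produces
\begin{equation*}
L'_0[p]=A\chi\,L'_0[p_0]+\gamma D+A\bigl[\alpha(\chi''p_0+2\chi'p_0')+\beta\chi'p_0\bigr].
\end{equation*}
Using the coefficient bounds from Corollary~\ref{app_cor} and the obvious uniform bounds on $p_0,p_0',p_0''$, the bracketed error is bounded by $C_\chi A$ for some constant $C_\chi$ depending only on the cutoff. Combined with $A\chi L'_0[p_0]\leq 0$ and $\gamma D\leq -D/(25\,r^2)$, this gives $L'_0[p]\leq -D/(25\,r^2)+C_\chi A$, and the supersolution inequality then holds once $D\geq 25(1+16\,C_\chi A)$, a finite demand since $r^2\leq 16$. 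Choosing $A=100$ first and then $D$ sufficiently large completes the construction.

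I expect the tightest step to be controlling the cross terms on the short transition interval of length $\tfrac{1}{10}$: the cutoff derivatives $|\chi'|,|\chi''|$ are necessarily large, so $C_\chi$ cannot be made small, but since the constraint on $D$ is only linear in $C_\chi A$ admissible constants can still be chosen. As a secondary concern, if $\gamma\leq 0$ on $[0,\tfrac{39}{10}]$ does not follow cleanly from Proposition~\ref{app_prop}, one can enlarge $A$ to absorb any positive part of $\gamma$ in the inner region; the resulting coupled linear constraints on $A$ and $D$ remain simultaneously solvable thanks to the quantitative sharpness of the bounds for $K$.
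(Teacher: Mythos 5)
Your overall architecture (quadratic inner piece, constant outer piece, cutoff in between) is in the same spirit as the paper, and the inner-region computation $L'_0[Ap_0]=-A\delta/r^2$ with $\delta\geq 1/100$ is correct and is exactly how the paper uses Corollary~\ref{app_cor}. But the outer piece is different, and this is where a genuine gap appears.

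The paper never takes the outer piece to be a constant on $[\tfrac{39}{10},\infty)$. Instead it uses
\begin{equation*}
p_2^{a,b,\delta}(r)=a+b\int_{\frac{39}{10}+\delta}^{r}e^{\frac29\bigl((\frac{39}{10}+\delta)^2-\rho^2\bigr)}\,d\rho,
\end{equation*}
which is engineered so that $p_2''=-\tfrac49\,r\,p_2'$, $p_2'<0$, $p_2''>0$. The point of this choice is that the coefficient bounds $\alpha\leq\tfrac{5}{4r^2}$ and $\beta\geq\tfrac{5}{9r}$ then force $\alpha p_2''+\beta p_2'\leq 0$, so the entire burden falls on $\gamma p_2\leq -\tfrac{1}{25r^2}\cdot\tfrac14$, which \emph{is} controlled by Corollary~\ref{app_cor}. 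The transition from $p_1$ to $p_2$ is then carried out over a tiny interval $[\tfrac{39}{10},\tfrac{39}{10}+\delta]$ with the two functions matched to first order, so the gluing errors are as small as one likes; and since $p_2$ converges exponentially to its limit $c$, the final cutoff to a constant happens far out where the cutoff derivatives hit an exponentially small quantity. None of this uses any sign information about $\gamma$ on $[0,\tfrac{39}{10}]$.

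Your construction, by contrast, leans on the claim that $\gamma\leq 0$ on $[0,\tfrac{39}{10}]$. That is not established anywhere in the paper: Corollary~\ref{app_cor} only gives $\gamma\leq -\tfrac{1}{25r^2}$ for $r\geq\tfrac{39}{10}$, and a quick test of the available bounds near $r\approx 3.5$ (using $a_{34}\geq\tfrac{173}{200}$ and one more recursion step, so $K(3.5)\gtrsim 0.877$) yields only $\gamma(3.5)\lesssim 10^{-2}$, which does not settle the sign. Your fallback of ``enlarge $A$ to absorb $\gamma^+D$'' is circular: the inner constraint becomes $A\geq 100(1+g r^2 D)$ with $g$ a bound for $\gamma^+$, the transition constraint is $D\geq 25+400\,C_\chi A$, and the two are simultaneously solvable only if $40000\,g\,r^2\,C_\chi<1$. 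With the cutoff supported on an interval of length $\tfrac1{10}$ you inevitably have $C_\chi$ in the tens at least, so this forces $g$ down to $10^{-7}$ or smaller, which the available bounds do not come close to providing. Moving the constant $D$ into only the outer part of the cutoff does not help either: then the transition errors scale like $D\chi'$ and $D\chi''$, i.e.\ $O(D)$, which the $O(D/r^2)$ negativity cannot absorb. The ODE-adapted $p_2$ is precisely what lets the paper sidestep both of these obstructions, and it is the ingredient missing from your argument.
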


\begin{proof}
Writing our model operator in the form
\begin{equation}
L_0' = \alpha(r)\partial^2_r + \beta(r)\partial_r + \gamma(r),
\end{equation}
by Corollary \ref{app_cor} (coefficient estimate) we have
\begin{equation}\label{delta_coeff}
2\alpha(r)+ 2\beta(r)\left(r-\tfrac{22}{10}\right)-\gamma(r)\left(5-\left(r-\tfrac{22}{10}\right)^2\right)\geq \frac{1}{100r^2}\qquad (\textrm{for } r\leq 4),
\end{equation}
and
\begin{equation}\label{alphabetagamma}
\alpha(r) \leq \frac{5}{4r^2}, \qquad \beta(r) \geq \frac{5}{9r}, \qquad \gamma(r) \leq -\frac{1}{25r^2}\qquad (\textrm{for } r\geq \tfrac{39}{10}).
\end{equation}
Motivated by this, let us define
\begin{equation}
p_1(r) = 5-\left(r-\tfrac{22}{10}\right)^2\qquad \textrm{and} \qquad p_2^{a,b,\delta}(r)=a+b\int_{\frac{39}{10}+\delta}^r e^{\frac29 \left(\left(\frac{39}{10}+\delta\right)^2-\rho^2\right)}\, d\rho\, .
\end{equation}
Then the inequality \eqref{delta_coeff} takes the form
\begin{equation}
L_0'[p_1]\leq  -\frac{1}{100r^2}\qquad (\textrm{for } r\leq 4).
\end{equation}
On the other hand, a direct computation shows that
\begin{equation}
5-\left(\tfrac{39}{10}-\tfrac{22}{10}\right)^2-2\left(\tfrac{39}{10}-\tfrac{22}{10}\right)\int_{\frac{39}{10}}^\infty e^{\frac29 \left(\left(\frac{39}{10}\right)^2-\rho^2\right)}\, d\rho \geq\frac{1}{3}.
\end{equation}
By continuity we thus infer that if
\begin{equation}\label{ab_parameters}
\left|a-p_1(\tfrac{39}{10}+\delta)\right|+\left|b- p_1'(\tfrac{39}{10}+\delta)\right|\leq \bar{\delta}, 
\end{equation}
for $\delta,\bar{\delta}>0$ sufficiently small, then $p_2=p_2^{a,b,\delta}$ satisfies
\begin{equation}
p_2\geq \frac14.
\end{equation}
Moreover, observing also that since $b<0$ we have
\begin{equation}
p_2 ' <0 \qquad \textrm{and} \qquad p_2'' = -\frac49 r p_2' >0,
\end{equation}
we can use \eqref{alphabetagamma} to estimate
\begin{equation}
L'_0[p_2]\leq -\frac{1}{100r^2} \qquad (\textrm{for } r\geq \tfrac{39}{10}+\delta).
\end{equation}
Hence, fixing $\delta,\bar{\delta}>0$ sufficiently small, by the gluing principle from \cite[Remark 3.7]{BuzanoHaslhoferHershkovits} we can find a smooth positive function $\bar{p}$ and parameters $a,b$ satisfying \eqref{ab_parameters}, such that
\begin{equation}
\bar p=p_1\,\,\,\textrm{ for } r\leq \tfrac{39}{10}\qquad \textrm{and} \qquad \bar p=p_2^{a,b,\delta} \,\,\,\textrm{ for } r\geq \tfrac{39}{10}+\delta,
\end{equation}
and
\begin{equation}
L_0'[\bar p]\leq -\frac{1}{200r^2}\quad\textrm{ for all } r> 0.
\end{equation}
Setting 
\begin{equation}
p=400\left(\chi \bar{p}+(1-\chi)c\right),
\end{equation}
where $\chi$ is a suitable cutoff function and $c=\lim_{r\rightarrow \infty} \bar{p}(r)$, this implies the assertion.
\end{proof}

Combining the above two results we obtain a supersolution in the entire tip region:

\begin{corollary}[supersolution in tip region]\label{cor_supersol_entire_tip}
There exists a smooth positive function $S=S(v,\tau)$ such that for all $\tau\leq\tau_0$ and $v\leq \theta$ we have
\begin{equation}\label{Sgbd}
L_{\tip}'[S]\leq -|\tau|^{1/2}-\frac{1}{v^2}\qquad \textrm{and} \qquad C^{-1}(1+v|\tau|^{1/2}) \leq S(v,\tau) \leq C(1+v|\tau|^{1/2}).
\end{equation}
\end{corollary}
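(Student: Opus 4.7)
The plan is to combine Proposition \ref{prop_super_sol_collar} and Proposition \ref{super_sol_bowl} into a single supersolution by superposition. Set
\begin{equation*}
S(v,\tau) := A + C\,p\!\left(v|\tau|^{1/2}\right) + (v-v^2)|\tau|^{1/2},
\end{equation*}
where $p$ is the smooth positive bowl supersolution from Proposition \ref{super_sol_bowl} and $A,C<\infty$ are large constants to be fixed. Since $p(r)$ is constant for $r$ sufficiently large, after enlarging $\ell$ if necessary we may assume that $p(v|\tau|^{1/2})$ equals a constant value $p_\infty$ whenever $v\geq \ell/|\tau|^{1/2}$. Consequently, on the collar region $v\in[\ell/|\tau|^{1/2},\theta]$ the function $S$ reduces to $A'+(v-v^2)|\tau|^{1/2}$ with $A':=A+Cp_\infty$, and choosing $A'$ sufficiently large, Proposition \ref{prop_super_sol_collar} directly yields $L_{\tip}'[S]\leq -|\tau|^{1/2}-1/v^2$ there.

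It remains to treat the soliton region $v\leq \ell/|\tau|^{1/2}$. Passing to the zoomed radial variable $r=v|\tau|^{1/2}\in[0,\ell]$, the precise profile asymptotics of \cite{CHH_translators} guarantee that $Y_v\to\varphi'(r)$ and $|\tau|^{-1/2}Y_{vv}\to\varphi''(r)$ as $\tau\to-\infty$, with quantitative errors. A direct chain-rule calculation for $P(v,\tau):=p(v|\tau|^{1/2})$ then gives
\begin{equation*}
L_{\tip}'[P] = |\tau|\,L_0'[p](r) + \mathcal{E}_1, \qquad |\mathcal{E}_1|\leq C_\ell\,|\tau|^{1/2},
\end{equation*}
where the remainder $\mathcal{E}_1$ collects the subleading contributions from $-P_\tau$, $-(v/2)P_v$, and from the profile errors in the remaining coefficients. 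A parallel computation for $Q:=(v-v^2)|\tau|^{1/2}$ uses the key cancellation $(1/v)Q_v - Q/v^2 = -|\tau|^{1/2}$ to give $L_{\tip}'[Q]\leq -|\tau|^{1/2}+\mathcal{E}_2$, where $|\mathcal{E}_2|\leq C_\ell/v^2$ thanks to the enhanced quadratic concavity and third-derivative bounds of Section \ref{sec_enh} applied to the dangerous zeroth-order term $-2((Y_{vv}Y_v)/(1+Y_v^2)^2)_v Q$. Combining these with $L_{\tip}'[A]=-A/v^2$ and $L_0'[p]\leq -1/r^2$ from Proposition \ref{super_sol_bowl}, we obtain
\begin{equation*}
L_{\tip}'[S] \leq -\frac{A+C-C_\ell-1}{v^2} - |\tau|^{1/2} + C_\ell|\tau|^{1/2}.
\end{equation*}
In the soliton region one has $1/v^2\geq |\tau|/\ell^2$, so the positive error $C_\ell|\tau|^{1/2}$ is absorbed by a small fraction of $(A+C)/v^2$ provided $|\tau_0|$ is taken sufficiently large, and choosing $A+C$ large relative to $C_\ell$ then secures the required inequality.

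Finally, the pointwise comparison $C^{-1}(1+v|\tau|^{1/2})\leq S\leq C(1+v|\tau|^{1/2})$ follows by inspection: on the soliton region all relevant quantities lie in a fixed compact interval bounded away from zero and infinity; on the collar region one uses $(v-v^2)|\tau|^{1/2}\in[(1-\theta)v|\tau|^{1/2},v|\tau|^{1/2}]$ (since $v\leq\theta$) so that this dominant term is comparable to $v|\tau|^{1/2}$, while the additive constant $A+Cp_\infty$ is dominated by $v|\tau|^{1/2}\geq \ell$. The main technical obstacle is the soliton-region bookkeeping, and in particular the treatment of the $v$-derivative of the zeroth-order coefficient of $L_{\tip}'$; in that region the bowl approximation of $Y$ from \cite{CHH_translators} and the sharp derivative estimates from Sections \ref{sec_bowl} and \ref{sec_enh} must be combined to control $-2((Y_{vv}Y_v)/(1+Y_v^2)^2)_v$ uniformly down to $r=0$.
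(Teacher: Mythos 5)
Your approach is essentially the same as the paper's: combine the bowl supersolution from Proposition~\ref{super_sol_bowl} in the soliton region with the collar supersolution from Proposition~\ref{prop_super_sol_collar}, matching the two via the fact that $p(r)$ is constant for large $r$. Two corrections are needed. First, the zoom variable should be $r=v\sqrt{|\tau|/2}$, not $r=v|\tau|^{1/2}$. The model operator $L_0'$ is built from the bowl profile $\varphi$ with tip speed $1$, and only under the $\sqrt{|\tau|/2}$ rescaling does the zoomed-in profile $Z(r,\tau)=\sqrt{|\tau|/2}\big(Y(r\sqrt{2/|\tau|},\tau)-Y(0,\tau)\big)$ (equation~\eqref{def_Z}) converge to $\varphi$; with your scaling the limiting operator differs from $L_0'$ by a fixed rescaling, so $L_0'[p]\leq -1/r^2$ does not apply as stated. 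This is easily fixed by adjusting the scale or precomposing $p$ with a dilation by $\sqrt{2}$. Second, the identity $L_{\tip}'[A]=-A/v^2$ is not correct: for constant $A$ one has $L_{\tip}'[A]=-A/v^2-2A\big(\tfrac{Y_{vv}Y_v}{(1+Y_v^2)^2}\big)_v$, and the second term does not vanish. It is subleading and can be absorbed into the error bookkeeping, but you must account for it, and the relevant control in the soliton region comes from the bowl asymptotics of \cite[Theorem 2.1]{CHH_lin_trans} (uniformly down to $r=0$), not from the enhanced estimates of Section~\ref{sec_enh}, which pertain to the cylindrical region. With these adjustments your argument coincides with the paper's, which also establishes $L_{\tip}'[P]\leq -1/v^2$ in the soliton region and $L_{\tip}'[(v-v^2)|\tau|^{1/2}]\leq C|\tau|$ there, then takes a large multiple of $P$ to dominate.
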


\begin{proof} Making the ansatz $P(v,\tau)=2p\left(v\sqrt{|\tau|/2}\right)$, a direct computation shows that
\begin{equation}\label{Lanz}
L_{\tip}'[P]=|\tau|\left(\frac{p_{rr}}{1+Z_r^2}+\left(\frac{1}{r}-\frac{4Z_{rr}Z_{r}}{(1+Z_r^2)^2}\right)p_{r}-\left(\frac{1}{r^2}+\left(\frac{2Z_{rr}Z_{r}}{(1+Z_{r}^2)^2}\right)_{r}\,\right)p\right)-\left(1-\frac{1}{|\tau|}\right)r p_r,
\end{equation}
where $r=v\sqrt{|\tau|/2}$, and where $Z$ denotes the zoomed in profile function defined by
 \begin{equation}\label{def_Z}
Z(r,\tau)= \sqrt{|\tau|/2}\left(Y\big(r\sqrt{2/|\tau|},\tau\big)-Y(0,\tau)\right).
\end{equation}
Hence, letting $p=p(r)$ be the function from Proposition \ref{super_sol_bowl} (supersolution on bowl) and using also the asymptotics from \cite[Theorem 2.1]{CHH_lin_trans} for $\tau\leq \tau_0$ and $v\leq \ell/\sqrt{|\tau|}$ we get
\begin{equation}\label{P_0_ineq}
L_{\tip}'[P] \leq -\frac{1}{v^2}.
\end{equation}
Moreover, using again \cite[Theorem 2.1]{CHH_lin_trans} we see that for $\tau\leq \tau_0$ and $v\leq \ell/\sqrt{|\tau|}$ we have
\begin{equation}\label{vvs_sol}
L_{\tip}'\left[(v-v^2)|\tau|^{1/2}\right] \leq C|\tau|.
\end{equation}
Hence, for any $\tilde{A}<\infty$ sufficiently large,  for $\tau\leq \tau_0$ and $v\leq \ell/\sqrt{|\tau|}$ we obtain
\begin{equation}
L_{\tip}'\left[\tilde{A}P+(v-v^2)|\tau|^{1/2}\right]\leq -\frac{1}{v^2}.
\end{equation}
Recalling also that, possible after increasing $\ell$, the function $p(r)$ is constant for $r\geq\ell/2$, together with Proposition \ref{prop_super_sol_collar} (supersolution in collar region) this implies the assertion.
\end{proof}

Finally, away from the tip region we have:

\begin{proposition}[supersolution in cylindrical region]\label{prop:grad.super.cyl} For all $\tau\leq \tau_0$ and  $\theta/2\leq v\leq \sqrt{2}-\ell^2/(10|\tau|)$ the function $R(v,\tau)=(2-v^2)^{-\frac{1}{2}}|\tau|^{\frac{1}{2}}$ satisfies
\begin{equation}
L_{\tip}'[R] \leq -\tfrac{1}{5}(2-v^2)R.
\end{equation}
\end{proposition}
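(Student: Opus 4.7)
The plan is to convert the operator $L_{\tip}'$ to $y$-coordinates via the inverse function relations $v_y=1/Y_v$ and $v_{yy}=-Y_{vv}/Y_v^3$, which yield $1/(1+Y_v^2)=v_y^2/(1+v_y^2)$ and $Y_{vv}Y_v/(1+Y_v^2)^2=-v_{yy}/(1+v_y^2)^2$. Combined with the direct computations $R_v/R=v/(2-v^2)$, $R_{vv}/R=(2+2v^2)/(2-v^2)^2$, $-R_\tau/R=1/(2|\tau|)$, and the algebraic identity $(\tfrac{1}{v}-\tfrac{v}{2})\cdot\tfrac{v}{2-v^2}=\tfrac{1}{2}$, one obtains
\begin{equation}
\frac{L_{\tip}'[R]}{R}=\frac{1}{2|\tau|}-\frac{2-v^2}{2v^2}+T_3+T_4+T_5,
\end{equation}
where $T_3=(2+2v^2)v_y^2/[(1+v_y^2)(2-v^2)^2]$, $T_4=4vv_{yy}/[(1+v_y^2)^2(2-v^2)]$, and $T_5=2v_{yyy}/[v_y(1+v_y^2)^2]-8v_{yy}^2/(1+v_y^2)^3$. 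Since $v^2\leq 2$ on the region of interest, the second term is bounded above by $-(2-v^2)/4$, and this is the principal negative contribution driving the target estimate.

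The crux of the argument is extracting a negative sign from $T_3+T_4$. Substituting $4vv_{yy}=2(v^2)_{yy}-4v_y^2$ via $(v^2)_{yy}=2v_y^2+2vv_{yy}$, I would obtain
\begin{equation}
T_3+T_4=\frac{6(v^2-1)v_y^2+2(1+v^2)v_y^4+2(2-v^2)(v^2)_{yy}}{(1+v_y^2)^2(2-v^2)^2}.
\end{equation}
Next, I would apply the enhanced quadratic concavity $(v^2)_{yy}\leq-(2-\varepsilon)/|\tau|$ from Corollary \ref{cor:improved.q_zz} (extended to the full range $y\in[c\ell,C|\tau|^{1/2}]$ using standard ADS-type asymptotics where the stated domain falls short), together with the leading order expansion $v_y^2=(2-v^2)/(v^2|\tau|)+O(1/|\tau|^2)$ that follows from $v^2\approx 2-(y^2-2)/|\tau|$. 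For $v^2\leq 1$ the first summand of the numerator is automatically non-positive, while for $v^2\in[1,2]$ the elementary bound $6(v^2-1)/v^2\leq 3$ gives $6(v^2-1)v_y^2\leq 3(2-v^2)/|\tau|+O(1/|\tau|^2)$, which is strictly dominated by the contribution $2(2-v^2)(v^2)_{yy}\leq-(4-2\varepsilon)(2-v^2)/|\tau|$; the $v_y^4$ term is lower order. Hence the numerator is bounded above by $-(1-O(\varepsilon))(2-v^2)/|\tau|$, so $T_3+T_4\leq-c/[|\tau|(2-v^2)]$ for some $c>0$.

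For $T_5$ the summand $-8v_{yy}^2/(1+v_y^2)^3$ is manifestly non-positive, and I would control $2v_{yyy}/[v_y(1+v_y^2)^2]$ using Lemma \ref{lem:q_zzz/z.cyl} and Proposition \ref{thm:enhanced.third}: the third derivative decay $|v_{yyy}|\leq\varepsilon|y|/|\tau|$ together with $v_y\approx-y/(v|\tau|)$ gives $|v_{yyy}/v_y|\leq C\varepsilon v$, a harmless error. Finally, the positive term $T_1=1/(2|\tau|)$ is absorbed into the slack $(2-v^2)/4-(2-v^2)/5=(2-v^2)/20$ using the lower bound $2-v^2\geq c\ell^2/|\tau|$ coming from the hypothesis $v\leq\sqrt{2}-\ell^2/(10|\tau|)$, which is valid for $\ell$ sufficiently large. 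Collecting the estimates yields the claimed bound $L_{\tip}'[R]\leq-\tfrac{1}{5}(2-v^2)R$.

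The main obstacle is the cancellation in $T_3+T_4$: at leading order both summands are of comparable magnitude $\sim 1/[|\tau|(2-v^2)]$ with opposite signs, and the net negative sign depends simultaneously on the sharp leading coefficient $-2$ of $(v^2)_{yy}$ (produced by the enhanced quadratic concavity developed in Section \ref{sec_enh}), on the refined ADS asymptotic value of $v_y^2$, and on the elementary inequality $6(v^2-1)/v^2\leq 3<4$ valid on $v^2\in[1,2]$.
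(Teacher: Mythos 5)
Your overall strategy matches the paper's: convert to $y$-coordinates via $1/(1+Y_v^2)=v_y^2/(1+v_y^2)$ and $Y_vY_{vv}/(1+Y_v^2)^2=-v_{yy}/(1+v_y^2)^2$, split the terms, exploit the algebraic identity $(\tfrac1v-\tfrac v2)\cdot\tfrac{v}{2-v^2}=\tfrac12$, and then extract the negative sign in the cylindrical region by playing the sharp profile gradient bound ($v^2v_y^2\leq(1+\eps)(2-v^2)/|\tau|$) against the enhanced quadratic concavity $(v^2)_{yy}\leq-(2-\eps)/|\tau|$. Your reorganization of $T_3+T_4$ into a single fraction with numerator $6(v^2-1)v_y^2+2(1+v^2)v_y^4+2(2-v^2)(v^2)_{yy}$ is algebraically correct and the inequality $6(v^2-1)/v^2\leq 3<4$ is a nice way to package the same cancellation the paper carries out near the cylinder.

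However, there is a genuine gap in your treatment of the range of validity. Corollary \ref{cor:improved.q_zz} only gives $(v^2)_{yy}\leq-(2-\eps)/|\tau|$ for $|y|\leq\delta|\tau|^{1/3}$, while the collar–cylinder region $\theta/2\leq v$ corresponds to $y$ up to size $\sim|\tau|^{1/2}$. Your parenthetical claim that the enhanced concavity "extends to the full range $y\in[c\ell,C|\tau|^{1/2}]$ using standard ADS-type asymptotics" is not correct: ADS-type asymptotics (and the translator analogue from \cite{CHH_translators}) only give $(v^2)_{yy}\leq 0$ up to exponential errors, not the strict $-(2-\eps)/|\tau|$ bound that your $T_3+T_4$ cancellation depends on. In the range $\delta|\tau|^{1/3}<y\lesssim|\tau|^{1/2}$ where $v^2$ can still be in $(1,2)$, your claimed mechanism therefore does not close. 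The correct fix, which the paper implements via the indicator $1_{\{y>\delta|\tau|^{1/3}\}}$, is to observe that once $y\gg|\tau|^{1/4}$ the slack $\tfrac{2-v^2}{4}-\tfrac{2-v^2}{5}=\tfrac{2-v^2}{20}\sim y^2/|\tau|$ already dominates $T_3\sim C/y^2$ outright, so no cancellation is needed there; the cancellation is only required for $y\lesssim|\tau|^{1/4}$, which falls inside $y\leq\delta|\tau|^{1/3}$.

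There is also a softer issue in your $T_5$ estimate. Using only Lemma \ref{lem:q_zzz/z.cyl} you get $|v_{yyy}/v_y|\lesssim\eps v$, which at the inner edge $y\sim\ell$ is a fixed constant $\approx\sqrt{2}\eps$, while the slack $\tfrac{2-v^2}{20}$ there is only $\sim\ell^2/|\tau|\to 0$. To call this "harmless" you must absorb it against the negative term $T_3+T_4\lesssim-1/(|\tau|(2-v^2))\lesssim-1/\ell^2$ at $y\sim\ell$, which forces $\eps\lesssim 1/\ell^2$, i.e., $\eps$ must be chosen after and smaller than $1/\ell^2$. This is fixable under the paper's conventions, but it is a dependency you do not track. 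The paper avoids this entanglement by invoking Proposition \ref{thm:enhanced.third} to obtain the sharper bound $T_5\lesssim Cy/|\tau|$, which at $y\sim\ell$ is $\lesssim C\ell/|\tau|$ and is dominated by the slack as soon as $\ell$ is large, without any constraint tying $\eps$ to $\ell$.
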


\begin{proof}Plugging our function $R=(2-v^2)^{-\frac{1}{2}}|\tau|^{\frac{1}{2}}$ into \eqref{Ltip_Der} we get
\begin{equation}
L_{\tip}'[R]=\left(\frac{1}{2|\tau|}+\frac{2(1+v^2)}{(2-v^2)^2(1+Y_v^2)}+\frac{1}{2}-\frac{4vY_vY_{vv}}{(2-v^2)(1+Y_v^2)^2}-\frac{1}{v^2}-\left(\frac{2Y_vY_{vv}}{(1+Y_v^2)^2}\right)_v \right) R.
\end{equation}
Now, by \cite[Theorem 1.8]{CHH_profile} we have $v^2v_y^2\leq (1+\eps)(2-v^2)/|\tau|$, so we can estimate
\begin{equation}
\frac{2(1+v^2)}{(2-v^2)^2(1+Y_v^2)}=\frac{2(1+v^2)v_y^2}{(2-v^2)^2(1+v_y^2)}\leq \frac{2}{2-v^2}\left(1+\frac{1}{v^2}\right)\frac{1+\eps}{|\tau|}.
\end{equation}
Next, using Corollary \ref{cor:improved.q_zz} (enhanced quadratic concavity estimate) we see that
\begin{equation}
-\frac{4vY_vY_{vv}}{(2-v^2)(1+Y_v^2)^2}=\frac{2}{2-v^2}\frac{(v^2)_{yy}-2v_y^2}{(1+v_y^2)^2}\leq -\frac{2}{2-v^2}\frac{2-\eps}{|\tau|}1_{\{y\leq \delta |\tau|^{1/3}\}} + Ce^\tau.
\end{equation}
Finally, using Proposition \ref{thm:enhanced.third} (enhanced third derivative estimate), taking also into account \cite[Theorem 1.8 and Theorem 1.9]{CHH_profile}, we can estimate
\begin{equation}
-\left(\frac{2Y_vY_{vv}}{(1+Y_v^2)^2}\right)_v=\frac{1}{v_y}\left(\frac{2v_{yy}}{(1+v_y^2)^2}\right)_y\leq \frac{Cy}{|\tau|}.
\end{equation}
Combining the above estimates we infer that
\begin{equation}
L_{\tip}'[R]\leq \left(\frac{v^2-2}{2v^2}+\frac{C}{y^2}1_{\{y> \delta |\tau|^{1/3}\}} +\frac{Cy}{|\tau|} \right) R.
\end{equation}
This implies the assertion.
\end{proof}
 
To glue these supersolutions together, we fix a monotone smooth function $\chi:\mathbb{R}\rightarrow \mathbb{R}_{+}$ with $\chi(v)=0$ for $v\leq {\theta}/2$ and $\chi(v)=1$ for $v\geq \theta$, and set
\begin{equation}\label{b_def}
B=\chi R + (1-\chi)\Lambda S,
\end{equation}
where $\Lambda=\Lambda(\theta)<\infty$ is a large constant, such that $R\leq \Lambda S$ in the region $\theta/2\leq v\leq \theta$.

\begin{corollary}[global supersolution]\label{prop:grad.super}
For $\tau\leq\tau_0$ and  $v\leq \sqrt{2}-\ell^2/(10|\tau|)$ we have
\begin{equation}
L_{\tip}'[B]\leq 
  \begin{cases}
    -\tfrac{1}{5}(2-v^2)B, & \text{if } v\geq \theta \\
    -|\tau|^{1/2}-v^{-2} & \text{if } v\leq \theta \,.
  \end{cases}
\end{equation} 
\end{corollary}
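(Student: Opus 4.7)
The strategy is to verify the two claimed inequalities on three subregions demarcated by the cutoff $\chi$, and to handle the overlap by a gluing argument that absorbs the commutator error using the large factor $\Lambda$.

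For $v \geq \theta$ one has $\chi \equiv 1$, hence $B = R$, and the first inequality is precisely Proposition \ref{prop:grad.super.cyl}. For $v \leq \theta/2$ one has $\chi \equiv 0$, so $B = \Lambda S$; since $\Lambda \geq 1$, the second inequality is then immediate from Corollary \ref{cor_supersol_entire_tip} by the linearity of $L_{\tip}'$.

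The substantive step is the transition region $\theta/2 \leq v \leq \theta$. Since $\chi$ depends on $v$ only, the product rule gives
\[
L_{\tip}'[B] = \chi \, L_{\tip}'[R] + (1-\chi)\, L_{\tip}'[\Lambda S] + E,
\]
where, writing $\alpha = (1+Y_v^2)^{-1}$ and $\beta = v^{-1} - v/2 - 4 Y_v Y_{vv}(1+Y_v^2)^{-2}$ for the second- and first-order coefficients of $L_{\tip}'$, the commutator is
\[
E = \alpha\, \chi_{vv} (R - \Lambda S) + 2\alpha\, \chi_v (R_v - \Lambda S_v) + \beta\, \chi_v (R - \Lambda S).
\]
On the strip $v \in [\theta/2, \theta]$ the coefficients $\alpha,\beta$ and the cutoff derivatives $\chi_v, \chi_{vv}$ are bounded in terms of $\theta$; the functions $R, R_v$ are explicitly $O(|\tau|^{1/2})$; and $S, S_v$ are of size $|\tau|^{1/2}$ by the explicit ansatz $S = \tilde A \cdot 2 p(v\sqrt{|\tau|/2}) + (v-v^2)|\tau|^{1/2}$ used in the proof of Corollary \ref{cor_supersol_entire_tip}, together with the fact that $p(r)$ is constant for $r \geq \ell/2$ (so that $P_v \equiv 0$ on the transition strip for $\tau_0$ negative enough). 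Consequently $|E| \leq C(\theta)(1+\Lambda)|\tau|^{1/2}$.

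For the principal contributions we use the finer expansion
\[
L_{\tip}'[R] = \Bigl(\tfrac{v^2-2}{2v^2} + \textrm{small}\Bigr) R
\]
established inside the proof of Proposition \ref{prop:grad.super.cyl}, which yields $\chi\, L_{\tip}'[R] \leq -\chi \tfrac{(2-\theta^2)^{1/2}}{2\theta^2}|\tau|^{1/2}$, while Corollary \ref{cor_supersol_entire_tip} provides $(1-\chi)L_{\tip}'[\Lambda S] \leq -(1-\chi)\Lambda(|\tau|^{1/2} + v^{-2})$. Taking $\Lambda$ large enough depending on $\theta$, the error $E$ is absorbed by these negative contributions: when $\chi$ is not close to $1$ the term $(1-\chi)\Lambda(|\tau|^{1/2} + v^{-2})$ dominates both $E$ and the target $|\tau|^{1/2} + v^{-2}$, while when $\chi$ is close to $1$ the stronger bound on $\chi L_{\tip}'[R]$ (with leading constant $1/(2\theta^2)$) supplies enough to dominate both $E$ and $v^{-2}$; the small deficit in the $v^{-2}$ contribution near $\chi \approx 1$ is absorbed using that $|\tau|^{1/2} \geq 4/\theta^2$ for $\tau_0$ sufficiently negative. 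Combining the three regions yields the claim.

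The main obstacle is precisely this transition region: one must simultaneously control the commutator $E$ and match the two rather different negative contributions (which behave like $-|\tau|^{1/2}/v^2$ from $\chi L_{\tip}'[R]$ and like $-\Lambda v^{-2}$ from $(1-\chi) L_{\tip}'[\Lambda S]$). This forces us to invoke the explicit constructions of $R$ and $S$ rather than only the stated supersolution inequalities, and to tune $\Lambda$ and $\tau_0$ in the order $\theta \to \Lambda \to \tau_0$.
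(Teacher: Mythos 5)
The proposal identifies the right structure (split into $v\geq\theta$, $v\leq\theta/2$, and the transition strip; the only nontrivial work is controlling the commutator $E$), and it correctly derives
\[
E=\alpha\chi_{vv}(R-\Lambda S)+2\alpha\chi_v(R-\Lambda S)_v+\beta\chi_v(R-\Lambda S),\qquad \beta=\tfrac1v-\tfrac v2-\tfrac{4Y_vY_{vv}}{(1+Y_v^2)^2}.
\]
However, the absorption argument has a genuine gap, and it stems from missing the one structural observation that makes the paper's proof close. You estimate $|E|\leq C(\theta)(1+\Lambda)|\tau|^{1/2}$ (dominated by the $\beta\chi_v(R-\Lambda S)$ piece, which is indeed of size $|\tau|^{1/2}$) and then propose to take $\Lambda$ large. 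This cannot work: the error $E$ grows with $\Lambda$, but at $\chi\approx 1$ the available negative contribution, even using the sharper interior expansion $L_{\tip}'[R]\leq -\tfrac{1}{2v^2}(2-v^2)^{1/2}|\tau|^{1/2}+\dots$, is a $\Lambda$-independent quantity of order $\tfrac{1}{\theta^2}|\tau|^{1/2}$. So for any fixed $\theta$, choosing $\Lambda$ large enough to handle the $\chi$-away-from-$1$ part simultaneously makes $E$ overwhelm the $\chi\approx 1$ part. The "take $\Lambda$ large depending on $\theta$" step does not converge.

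The point the paper uses, and which you overlook, is that the dominant term of the commutator has a favorable sign and can simply be discarded. The constant $\Lambda$ is chosen precisely so that $R\leq\Lambda S$ on the strip, hence $R-\Lambda S\leq 0$ there; the cutoff is increasing so $\chi_v\geq 0$; and $\tfrac1v-\tfrac v2>0$ for $v<\sqrt2$. Therefore
\[
\Bigl(\tfrac1v-\tfrac v2\Bigr)\chi_v\,(R-\Lambda S)\leq 0,
\]
and the formula $L_{\tip}'[B]\leq\chi L_{\tip}'[R]+(1-\chi)\Lambda L_{\tip}'[S]+E$ holds as an inequality with $E$ consisting only of the remaining pieces
\[
E=\Bigl[\chi_{vv}\tfrac{1}{1+Y_v^2}-\chi_v\tfrac{4Y_vY_{vv}}{(1+Y_v^2)^2}\Bigr](R-\Lambda S)+2\chi_v\tfrac{1}{1+Y_v^2}(R-\Lambda S)_v.
\]
These coefficients all carry a factor $\tfrac{1}{1+Y_v^2}\sim v_y^2\sim |\tau|^{-1}$ or $\tfrac{Y_vY_{vv}}{(1+Y_v^2)^2}\sim |v_{yy}|\sim |\tau|^{-1}$ (by the profile estimates of \cite{CHH_profile}), so $|E|=O(|\tau|^{-1/2})\leq C$. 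With $E$ uniformly bounded, it is instantly absorbed by the $|\tau|^{1/2}$-sized negative terms for $\tau_0$ negative enough, and no tuning of $\Lambda$ against $E$ is needed. This sign observation is the content of the requirement $R\leq\Lambda S$ in the definition of $B$, and without it the transition-region absorption does not go through. You should also note that to get the stated bound $-|\tau|^{1/2}-v^{-2}$ at $\chi\approx 1$ one does need the interior expansion $L_{\tip}'[R]\leq \bigl(\tfrac{v^2-2}{2v^2}+\text{small}\bigr)R$ rather than just the packaged conclusion of Proposition~\ref{prop:grad.super.cyl}, which you correctly identify; but that is a secondary point compared to the sign observation.
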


\begin{proof}
By Proposition \ref{prop:grad.super.cyl} (supersolution in cylindrical region) and Corollary \ref{cor_supersol_entire_tip} (supersolution in tip region) our only remaining task is to estimate the error term $E$ in the formula
\begin{equation}
L_{\tip}'[B]\leq \chi L_{\tip}'[R]+(1-\chi)\Lambda L_{\tip}'[S]+E.
\end{equation}
Explicitly, this error term is given by
\begin{equation}
E= \left[\chi''\frac{1}{1+Y_v^2}-\chi' \frac{4Y_vY_{vv}}{(1+Y_v^2)^2}\right](R-\Lambda S)+2\chi' \frac{1}{1+Y_v^2}(R-\Lambda S)_v,
\end{equation}
so using the derivative estimates from \cite{CHH_profile}
we see that $|E|\leq C$.
This implies the assertion.
\end{proof}

Using this global supersolution, we can now prove a technical version of our main estimate:

\begin{theorem}[gradient estimate - technical version]\label{thm_grad_est}
Let $u$ be a solution of the Dirichlet problem \eqref{bdval_prob} with inhomogeneity $f$, and denote by $W$ and $G$ the associated variation and inhomogeneity in tip gauge. Suppose that $A<\infty$ and $\tau_1\in [-\log (h)^{1/2}+1,-\log(h)^{1/2}+2]$ are such that
\begin{multline}\label{grad_a1}
|\tau|^{\mu-1}\!\!\!\!\!\!\sup_{\tau\in [-\log(h)^{1/2}+2,\tau_0]}  |W_v(v(\ell,\tau),\tau)|+ |\tau|^{\mu-2}\!\!\!\!\!\!\!\!\!\!\!\!\sup_{\tau\in [-\log (h)^{1/2}+1,-\log(h)^{1/2}+2]}  |W_v(v(\ell,\tau),\tau)|\\
+|\tau_1|^{\mu-3/2} \!\!\!\!\!\!\sup_{\theta\leq v\leq v(\ell,\tau_1)}\,\, (\sqrt{2}- v)^{1/2} |W_v(v,\tau_1)|
+|\tau_1|^{\mu-1}\sup_{v\leq \theta}\,\,(1+v|\tau_1|^{1/2})^{-1}|W_v(v,\tau_1)|
\leq A,
\end{multline}
and suppose that for all $\tau\in [-\log (h)^{1/2}+1,\tau_0]$ we have
\begin{equation}\label{grad_a2}
|\tau|^{\mu-1/2}\!\!\!\!\!\! \sup_{\theta\leq v\leq v(\ell,\tau)}\,\, (\sqrt{2}- v)^{-1/2} |G_v-e^\tau (\mathcal{F}W)_v|+ |\tau|^{\mu} \sup_{v\leq \theta}\,\, v^2(1+v^2|\tau|^{1/2})^{-1}  |G_v-e^\tau (\mathcal{F}W)_v| \leq A.
\end{equation}
Then, for all $\tau\in [-\log(h)^{1/4},\tau_0]$ we get
\begin{equation}
|\tau|^{\mu-1/2}\!\!\! \sup_{\theta\leq v\leq v(\ell,\tau)}\,\, (\sqrt{2}- v)^{1/2} |W_v|
+|\tau|^{\mu}\sup_{v\leq \theta}\,\,(1+v|\tau|^{1/2})^{-1}|W_v| \leq CA.
\end{equation}
\end{theorem}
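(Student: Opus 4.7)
The plan is to bound $W_v$ by parabolic comparison with a barrier built from the global supersolution $B$ of Corollary \ref{prop:grad.super}. Since $W$ solves the linearized translator equation in tip gauge, \eqref{LpWv} gives $L_{\tip}'[W_v] = G_v - e^\tau(\mathcal{F}W)_v$, so the task reduces to producing a function $\Phi$ satisfying $L_{\tip}'[\Phi] \leq -|G_v - e^\tau(\mathcal{F}W)_v|$ and dominating $|W_v|$ on the parabolic boundary. I would take
$$\Phi(v,\tau) = \bigl[C_1 A|\tau|^{-\mu} + C_2 A|\tau_1|^{1-\mu-\lambda}|\tau|^\lambda\bigr] B(v,\tau),$$
with constants $C_1, C_2 < \infty$ and $\lambda \in [2-\mu,\, c\ell^2]$, where $c>0$ is fixed so that the interval is non-empty for $\ell$ large. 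The first summand is the stationary part tuned to carry the forcing, while the transient second summand carries the initial data at $\tau = \tau_1$ and decays, thanks to the cylindrical inequality $L_{\tip}'[B] \leq -\tfrac{1}{5}(2-v^2)B$, down to a constant multiple of the first summand by $\tau = -\log(h)^{1/4}$.

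Next, I would verify the supersolution property via $L_{\tip}'[\alpha B] = -\alpha' B + \alpha L_{\tip}'[B]$ and the two regional estimates of Corollary \ref{prop:grad.super}. In the cylindrical region, using the pointwise shape $B \sim |\tau|^{1/2}(2-v^2)^{-1/2}$, the first summand contributes roughly $-\tfrac{C_1 A}{5}|\tau|^{1/2-\mu}(2-v^2)^{1/2}$, dominating the forcing bound $A|\tau|^{1/2-\mu}(\sqrt{2}-v)^{1/2}$ from \eqref{grad_a2} for $C_1 \geq 5$; the bad contribution $-\alpha_2' B$ from the transient piece is absorbed by $-\tfrac{\alpha_2}{5}(2-v^2)B$ as long as $\lambda \leq c\ell^2$, using $2-v^2 \geq c'\ell^2/|\tau|$ on the domain $\{v \leq v(\ell,\tau)\}$. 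In the tip region, $L_{\tip}'[B] \leq -|\tau|^{1/2}-v^{-2}$ directly produces a supersolution dominating the tip forcing $A|\tau|^{-\mu}(v^{-2}+|\tau|^{1/2})$ for $C_1 \geq 1$. I would then check the parabolic boundary: at $\tau = \tau_1$, the transient part $\alpha_2(\tau_1)B \geq C_2 A|\tau_1|^{1-\mu}B$ dominates both pieces of \eqref{grad_a1} using the shape of $B$ for $C_2$ large; at the outer boundary $v = v(\ell,\tau)$, where $B \sim |\tau|/\ell$, the stationary part yields $\alpha_1 B \sim C_1 A|\tau|^{1-\mu}/\ell$ dominating $A|\tau|^{1-\mu}$ for $C_1$ large relative to $\ell$, while the transient piece handles the larger boundary data on the earlier subinterval; at $v = 0$ the $\mathrm{SO}_2$-symmetry forces $W_v(0,\tau) = 0$, so the positive barrier dominates trivially.

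With $\Phi$ so constructed, the zeroth-order coefficient of $L_{\tip}'$, namely $-\bigl(v^{-2}+2(\tfrac{Y_{vv}Y_v}{(1+Y_v^2)^2})_v\bigr)$, is strictly negative by the enhanced profile estimates from \cite{CHH_profile}; hence $\psi_\pm := \pm W_v - \Phi$ satisfies $L_{\tip}'[\psi_\pm] \geq 0$ and the parabolic maximum principle rules out positive interior maxima, so the boundary checks force $\psi_\pm \leq 0$ throughout, yielding $|W_v| \leq \Phi$. For $\tau \in [-\log(h)^{1/4}, \tau_0]$, since $\lambda \geq 2-\mu$ and $|\tau| \leq C|\tau_1|^{1/2}$, the transient factor obeys $|\tau_1|^{1-\mu-\lambda}|\tau|^\lambda \leq C|\tau|^{-\mu}$, so $\Phi \leq CA|\tau|^{-\mu}B$; unpacking the shape of $B$ from Corollary \ref{cor_supersol_entire_tip} in the two regions then yields the two terms of the claimed estimate. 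The main obstacle is the joint constraint $2-\mu \leq \lambda \leq c\ell^2$, which forces $\ell$ to be taken sufficiently large, consistent with the paper's convention that $\ell$ may be adjusted.
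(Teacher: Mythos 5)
Your approach is essentially the same as the paper's: use the global supersolution $B$ from Corollary \ref{prop:grad.super}, weight it by a decreasing function of $\tau$ that is $\sim|\tau_1|^{1-\mu}$ at the initial slice $\tau_1$ (to dominate the boundary data in \eqref{grad_a1}) and decays to $\sim|\tau|^{-\mu}$ on $[-\log(h)^{1/4},\tau_0]$, and then run a comparison argument against $\pm W_v$ via \eqref{LpWv}. The paper implements this with a single convex truncation $\kappa(\tau)$, equal to $|\tau|^2/(\log h)^{1/2}$ near $\tau_1$ and to $2$ for $\tau\geq-(\log h)^{1/4}$, so that $|\kappa'/\kappa|\leq 2/|\tau|$; you use the additive ansatz $C_1A|\tau|^{-\mu}+C_2A|\tau_1|^{1-\mu-\lambda}|\tau|^\lambda$ with $\lambda\in[2-\mu,c\ell^2]$. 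In both cases the bad time-derivative term is absorbed by the cylindrical good term $-\tfrac15(2-v^2)B$ together with $2-v^2\gtrsim\ell^2/|\tau|$ on $\{v\leq v(\ell,\tau)\}$, at the cost of taking $\ell$ large; this is a cosmetic difference.

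There is, however, an error in your final maximum-principle step: you assert that the zeroth-order coefficient of $L_{\tip}'$, namely $-\bigl(v^{-2}+2(\tfrac{Y_{vv}Y_v}{(1+Y_v^2)^2})_v\bigr)$, is strictly negative, and then invoke the textbook maximum principle for operators with nonpositive zeroth-order term. This coefficient is \emph{not} sign-definite. In the soliton region, after the rescaling in \eqref{Lanz} it is modeled on $|\tau|\gamma(r)$ with $\gamma$ as in Corollary \ref{app_cor}, and $\gamma(r)$ becomes positive in an intermediate range of $r$ (e.g.\ plugging $K(3)\approx 0.8$ into the formula gives $\gamma(3)\approx 0.08>0$). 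This is precisely why Proposition \ref{super_sol_bowl} has to work with the carefully chosen nonconstant profile $p_1(r)=5-(r-\tfrac{22}{10})^2$ and the combination $\delta$, rather than just taking $p$ constant: if $\gamma$ were everywhere negative, $p\equiv 1$ would already be a supersolution. The fix is standard and is what the paper actually does: argue at the first time $\tau_\ast>\tau_1$ at which $\Phi\pm W_v$ ceases to be strictly positive. At such a first-touching point the comparison function vanishes, so the zeroth-order term drops out of the second-derivative test regardless of its sign, and strictness of the inequalities $L_{\tip}'[\Phi\pm W_v]<0$ yields the contradiction. With that correction your construction goes through and gives the stated estimate.
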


\begin{proof}Fix a convex function $\kappa:\mathbb{R}\to\mathbb{R}_+$ satisfying $|\kappa'|\leq 2\kappa/|\tau|$, such that $\kappa(\tau)=\tfrac{|\tau|^2}{(\log h)^{1/2}}$ for $\tau\leq -\log(h)^{1/2}+2$ and $\kappa(\tau)=2$ for $\tau\geq -(\log h)^{1/4}$. Then, as a consequence of Corollary \ref{prop:grad.super} (global supersolution) we have
\begin{equation}\label{eq_trunc_tip_barr}
L_{\tip}'[\kappa|\tau|^{-\mu} B]\leq \kappa|\tau|^{-\mu} L_{\tip}'[ B] +\tfrac{2+\mu}{|\tau|}\kappa|\tau|^{-\mu} B\leq
  \begin{cases}
    -\tfrac{\kappa}{8}(2-v^2)|\tau|^{-\mu}B, & \text{for }  v\in [\theta,v(\ell,\tau)] \\
    -\tfrac{\kappa}{4}(|\tau|^{1/2}+v^{-2})|\tau|^{-\mu} & \text{for } v\leq \theta \,.
  \end{cases}
  \end{equation}
Now, fixing a sufficiently large numerical factor $\lambda<\infty$, we consider $f_\pm := \lambda A \kappa|\tau|^{-\mu}B \pm W_v$
as a function of $(v,\tau)$, where $v\in [0,v(\ell,\tau)]$ and $\tau\in [\tau_1, \tau_0]$.
Thanks to assumption \eqref{grad_a1} we have $f_\pm(v(\ell,\cdot),\cdot)>0$, and $f_\pm(\cdot,\tau_1)>0$.
Moreover, using equation \eqref{LpWv}, assumption \eqref{grad_a2} and the estimate \eqref{eq_trunc_tip_barr}, we see that $L_{\tip}'[f_\pm]<0$.
Hence, we conclude that $f_\pm > 0$ for all $(v,\tau)$ in the domain under consideration, since otherwise at the first $\tau>\tau_1$ where this failed we would obtain a contradiction with the second derivative test. This proves the theorem.
\end{proof}

Finally, let us explain how this implies Theorem \ref{gradient_intro}, which we restate here for convenience.

\begin{theorem}[gradient estimate - simplified version]
In addition to \eqref{lower_bd_basic_sup_intro} and \eqref{g_growth_basic_sup_intro}, suppose that for all $\tau\in [-\log (h)^{1/2}+1,\tau_0]$ we have
\begin{equation}\label{grad_a2_intro}
|\tau|^{1/2+\mu}\!\!\!\sup_{y \in \left[\ell,Y(\theta,\tau)\right]}\,\, (\sqrt{2}- v)^{-3/2} |g_y(y,\tau)|+ |\tau|^{\mu}\sup_{v\leq \theta}\,\,  v^2(1+v^2|\tau|^{1/2})^{-1}  |G_v(v,\tau)| \leq A.
\end{equation}
Then, for all $\tau\in [-\log(h)^{1/4},\tau_0]$ we get
\begin{equation}
|\tau|^{1/2+\mu}\!\!\! \sup_{y \in \left[\ell,Y(\theta,\tau)\right]}\,\, (\sqrt{2}- v)^{-1/2} |w_y(y,\tau)|
+|\tau|^{\mu}\sup_{v\leq \theta}\,\,(1+v|\tau|^{1/2})^{-1}|W_v(v,\tau)| \leq CA.
\end{equation}
\end{theorem}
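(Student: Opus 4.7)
The plan is to derive this simplified version as a corollary of Theorem \ref{thm_grad_est} (technical version), the inner-outer estimate (Theorem \ref{inner_outer_intro}), and standard interior parabolic estimates applied to the equations \eqref{eq_intro_w} and \eqref{evolve_inhom_tip_intro}. The key identity connecting the two gauges is
\[
w_y=-v_{yy}W-v_y^2 W_v,
\]
obtained by differentiating the relation $w=-v_y W$ at corresponding points, which in turn comes from $\bw=-V_x\bW$ (a consequence of $X_t=-V_t/V_x$).

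First, I would verify the hypotheses \eqref{grad_a1} and \eqref{grad_a2} of Theorem \ref{thm_grad_est}. The quantity $G_v-e^\tau(\mathcal{F}W)_v$ in \eqref{grad_a2} is controlled by the first summand, namely our new hypothesis \eqref{grad_a2_intro}, plus an exponentially small remainder: since $\mathcal{F}$ couples through lower-order geometric quantities, Theorem \ref{inner_outer_intro} yields an $L^\infty$-bound on $W$, and interior Schauder estimates for \eqref{evolve_inhom_tip_intro} bound $(\mathcal{F}W)_v$ by at most a polynomial in $|\tau|$, which $e^\tau$ easily absorbs. The boundary bounds on $W_v(v(\ell,\tau),\tau)$ and the initial-time bound at $\tau_1$ required in \eqref{grad_a1} are obtained by applying interior Schauder estimates to \eqref{eq_intro_w} on a neighborhood of the parabolic region: this converts the $L^\infty$-bounds on $w$ supplied by \eqref{lower_bd_basic_sup_intro} into bounds on $w_y$ at $y=\ell$, which translate into the required bounds on $W_v$ via the above identity together with the profile estimates from \cite{CHH_profile,CHH_translators}.

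With the hypotheses of Theorem \ref{thm_grad_est} verified, its conclusion on $W_v$ in the tip region $v\leq\theta$ gives the second summand of the desired conclusion directly. For the $w_y$-bound on $y\in[\ell,Y(\theta,\tau)]$, I would combine the intermediate $W_v$-bound $|\tau|^{\mu-1/2}(\sqrt{2}-v)^{1/2}|W_v|\leq CA$ from Theorem \ref{thm_grad_est} with the above identity. For the $v_y^2|W_v|$ contribution, the ADS-type asymptotic $v_y^2\sim(\sqrt{2}-v)/|\tau|$, which stems from the approximate profile identity $v^2+y^2/|\tau|\approx 2$, precisely converts the factor $(\sqrt{2}-v)^{1/2}$ appearing in the weight system of $W_v$ into the desired $(\sqrt{2}-v)^{-1/2}$ appearing in that of $w_y$, accompanied by the additional factor $|\tau|^{-1/2}$. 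For the $|v_{yy}||W|$ contribution, one uses the intermediate bound $|w|\leq CA(\sqrt{2}-v)/|\tau|^\mu$ from Theorem \ref{inner_outer_intro}, the identity $|W|=|w|/|v_y|$, and the profile estimate $|v_{yy}|\lesssim 1/|\tau|$ from \cite{CHH_profile,CHH_translators}, to arrive at a contribution of the same order.

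The main obstacle, in my view, lies precisely in this weight conversion in the intermediate region: a naive use of the tip bound on $|W|$ provided by Theorem \ref{inner_outer_intro} would be off by unacceptable factors of $|\tau|^{1/2}$, so one has to use the intermediate bound on $w$ together with the profile identity $w=-v_y W$ and the precise ADS asymptotic for $v_y$ near $v=\sqrt{2}$. The verification of the boundary and initial-time bounds in \eqref{grad_a1} via interior Schauder estimates is largely routine, but one has to check that the shrinkage of the time domain from $\tau\in[-\log(h)^{1/2}+1,\tau_0]$ in the hypotheses to $\tau\in[-\log(h)^{1/4},\tau_0]$ in the conclusion is consistent with the reach of the inner-outer estimate.
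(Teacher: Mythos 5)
Your proposal matches the paper's argument step for step: the paper likewise deduces the simplified statement from Theorem \ref{thm_grad_est} by using Theorem \ref{inner_outer_intro} and the interior estimates of \cite{CHH_lin_trans} to verify hypotheses \eqref{grad_a1} and \eqref{grad_a2} (absorbing $e^\tau(\mathcal{F}W)_v$ as an exponentially small error), and then converts the resulting $W_v$-bounds back into $w_y$-bounds via the transformation rule $W_v=-Y_v^2(w_y-v_y^{-1}v_{yy}w)$, which is exactly your identity $w_y=-v_y^2W_v-v_{yy}W$. You have spelled out the ADS-type weight conversion $v_y^2\sim(\sqrt{2}-v)/|\tau|$ in the intermediate region more explicitly than the paper's closing sentence (``remembering the transformation rules\ldots''), but the route is identical.
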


\begin{proof}To begin with, recall that by \cite[Corollary 3.4]{CHH_lin_trans} we have the transformation rules
\begin{equation}
W(v,\tau)=-Y_v(v,\tau)w(Y(v,\tau),\tau),\qquad G(v,\tau)=-Y_v(v,\tau)g(Y(v,\tau),\tau),
\end{equation}
and consequently
\begin{align}\label{trans_der1}
W_v = -Y_v^{2} (w_y- v_y^{-1}v_{yy}w),\qquad
G_v = -Y_v^{2} (g_y- v_y^{-1}v_{yy}g),
\end{align}
which can be used to transform bounds from cylindrical gauge to tip gauge, and vice versa.\\

Now, note that by Theorem \ref{inner_outer_intro} (inner-outer estimate), for all $\tau\in [-\log (h)^{1/2},\tau_0]$ we have
\begin{equation}
|\tau|^{\mu}\sup_{v(y,\tau)\geq \theta}  \big(\sqrt{2}+10|\tau|^{-1}-v(y,\tau)\big)^{-1}|w(y,\tau)| +
 |\tau|^{\mu-1/2}\sup_{v\leq \theta} |W(v,\tau)| \leq CA.
\end{equation}
Hence, by the interior estimates from \cite[Proposition 6.3]{CHH_lin_trans}, for $\tau\in [ -\log(h)^{1/2}+1,\tau_0]$ we get
\begin{equation}
|\tau|^{1+\mu}\sup_{|y|\leq \ell}  | w_y(y,\tau) |\leq CA,
\end{equation}
and for $\tau_1\in [ -\log(h)^{1/2}+1,-\log(h)^{1/2}+2]$ we get
\begin{equation}
|\tau_1|^{\mu}\sup_{v(y,\tau_1)\geq \theta} \big(\sqrt{2}+10|\tau_1|^{-1}-v(y,\tau_1)\big)^{-1}|w_y(y,\tau_1)|\leq CA,
\end{equation}
and
\begin{equation}
{|\tau_1|^{\mu-1}}\sup_{\ell/|\tau_1|^{1/2}\leq v\leq \theta}|W_v(v,\tau_1)|\leq CA.
\end{equation}
Moreover, by the interior estimates from \cite[Proposition 6.6]{CHH_lin_trans} we get
\begin{equation}
{|\tau_1|^{\mu-1}}\sup_{ v\leq \ell/|\tau_1|^{1/2}}|W_v(v,\tau_1)|\leq CA.
\end{equation}
Hence, taking also into account that the exponential error term $e^\tau (\mathcal{F}W)_v$ can be controlled using \cite[Corollary 6.5 and Corollary 6.8]{CHH_lin_trans}, we see that the assumptions of Theorem \ref{thm_grad_est} (gradient estimate - technical version) are verified, and thus for all  $\tau\in [-\log(h)^{1/4},\tau_0]$ we get
\begin{equation}
|\tau|^{\mu-1/2} \sup_{\theta\leq v\leq v(\ell,\tau)}\,\, (\sqrt{2}- v)^{1/2} |W_v|
+|\tau|^{\mu}\sup_{v\leq \theta}\,\,(1+v|\tau|^{1/2})^{-1}|W_v| \leq CA.
\end{equation}
Remembering the transformation rules, this implies the assertion.
\end{proof}

\bigskip

\bibliography{LTE}
\bibliographystyle{alpha}

\vspace{5mm}

{\sc Kyeongsu Choi, School of Mathematics, Korea Institute for Advanced Study, 85 Hoegiro, Dongdaemun-gu, Seoul, 02455, South Korea}\\

{\sc Robert Haslhofer, Department of Mathematics, University of Toronto,  40 St George Street, Toronto, ON M5S 2E4, Canada}\\

{\sc Or Hershkovits, Department of Mathematics, University of Maryland, 4176 Campus Dr, College Park, MD 20742, USA and Institute of Mathematics, Hebrew University of Jerusalem, Jerusalem, 91904, Israel}\\

\end{document}